\theoremstyle{plain}
\newtheorem{theorem}{Theorem}[section]
\newtheorem{lemma}[theorem]{Lemma}
\newtheorem{corollary}[theorem]{Corollary}
\newtheorem{proposition}[theorem]{Proposition}
\theoremstyle{definition}
\newtheorem{definition}[theorem]{Definition}
\newtheorem{example}[theorem]{Example}
\theoremstyle{remark}
\newtheorem{remark}[theorem]{Remark}
\theoremstyle{Conclusion}
\newcommand{\Z}{\ensuremath{\mathbb{Z}_2^n}}
\newcommand{\Aut}{\ensuremath{\mathrm{Aut}(\mathcal{F}}}
\newcommand{\rvline}{\hspace*{-\arraycolsep}\vline\hspace*{-\arraycolsep}}
\title{Weakly  Equivariant Classification of Small Covers over a Product of Simplicies}
\author{\hspace*{-10pt}
	\begin{minipage}[t]{2.7in} \normalsize \baselineskip 12.5pt
	\centerline{Aslı GÜÇLÜKAN İLHAN}
	\centerline{Dokuz Eyl\"{u}l University}
	\centerline{Department of Mathematics}
	\centerline{T{\i}naztepe, \.{I}ZM\.{I}R}
	\centerline{TURKEY}
	\centerline{asli.ilhan@deu.edu.tr}
	\end{minipage} \kern 0in
	\begin{minipage}[t]{2.7in} \normalsize \baselineskip 12.5pt
	\centerline{S. Kaan G\"{U}RB\"{U}ZER}
		\centerline{Dokuz Eyl\"{u}l University}
		\centerline{Department of Mathematics}
		\centerline{T{\i}naztepe, \.{I}ZM\.{I}R}
		\centerline{TURKEY}
		\centerline{kaan.gurbuzer@deu.edu.tr}
	\end{minipage}
}
\begin{document}
\maketitle

\begin{abstract}
	Given a dimension function $\omega$, we define a notion of an $\omega$-vector weighted digraph and an $\omega$-equivalence between them. Then we establish a bijection between the weakly $(\mathbb{Z}/2)^n$-equivariant homeomorphism classes of small covers over  $\Delta^{n_1}\times\cdots \times \Delta^{n_k}$ and the set of $\omega$-equivalence classes of $\omega$-vector weighted digraphs with $k$-labeled vertices. As an example, we obtain a formula for the number of weakly $(\mathbb{Z}/2)^n$-equivariant homeomorphism classes of small covers over  a product of three simplices.
\end{abstract}

\textbf{Keywords:} small cover, weakly equivariant homeomorphism, acyclic digraph

MSC: 37F20, 57R91, 05C22

\section{Introduction}

Let $P$ be a simple convex polytope of dimension $n$. A small cover over $P$ is an $n$-dimensional smooth closed manifold $M$ with a locally standard $\mathbb{Z}_2^n$-action whose orbit space is $P$.  Two small covers over $P$ are said to be Davis-Januskiewicz equivalent if there is a weakly $\Z$-equivariant homeomorhism between them covering the identity on $P$. For every small cover $M$ over $P$, there is an associated function from the set of codimension one faces of $P$ to $\Z$ called a characteristic function. Classifications of small covers over $P$ up to Davis-Januskiewicz equivalence, $\Z$-equivariant homeomoprhism or weakly $\Z$-equivariant homeomorphism are closely related with the set of characteristic functions over $P$. This problem has been studied by many authors (\cite{Altunbulak-GuclukanIlhan, Cai-Chen-Lu, Main, Garrison-Scott}).

There are two natural group actions on the set of characteristic functions. The first one is the left free action of general linear group over $\mathbb{Z}_2$ and the second one is the right action of the group of automorphisms of the face poset of $P$. The orbit space of the first one is in one-to-one correspondence with the set of Davis-Januskiewicz equivalence classes of small covers over $P$ (\cite{Davis-Januszkiewicz}). There is a one-to-one correspondence between the orbit space of the second one and the set of $\Z$-equivariant homeomorphism classes of small covers over $P$ (\cite{Lu-Masuda}). As a combination of these results, there is a bijection between the double coset of these actions and the set of weakly $\Z$-equivariant homeomorphism classes of small covers.  These correspondences give interesting connections between topology and combinatorics.

In \cite{Main}, Choi shows that there is a bijection between Davis–Januszkiewicz equivalence classes of small covers over an $n$-cube and the set of acyclic digraphs with $n$-labeled vertices. Using this, one can obtain a bijection between the weakly $\Z$-equivariant homeomorphism classes of small covers over an $n$-cube and the orbit space of the action of $\mathbb{Z}_2 \wr S_n$ on acyclic digraphs with $n$ labeled vertices given by local complementation and reordering of vertices (\cite{GuclukanIlhan}). Local complementation is also appear in the classification of small covers over an $n$-cube up to homeomorphism (\cite{Choi-Masuda-Oum}). To generalize this result to small covers over a product of simplices, we introduce the notion of a $\omega$-vector weighted digraph for a given dimension function $\omega: \{1,2,\cdots,l\} \rightarrow \mathbb{N}$. A $\omega$-vector weighted digraph is a digraph with labeled vertices $\{v_1,\cdots, v_l\}$ where each edge directed from $v_i$ has a non-zero vector in $\mathbb{Z}_2^{\omega(i)}$.  It turns out that there is a bijection between Davis–Januszkiewicz equivalence classes of small covers over a product of simplices, namely $\Delta^{\omega(1)}\times \cdots \times \Delta^{\omega(l)}$, and the set of acyclic $\omega$-vector weighted digraphs. Hence the number of Davis–Januszkiewicz equivalence classes of small covers over a product of simplices is given by the formula (\ref{formulaChoi}) in Proposition \ref{prop:w-vector}. This formula was first obtained by Choi \cite{Main} by defining a surjection to the set of underlying digraphs and counting the sizes of preimages.

We observe that the action of the automorphism group on Davis-Januskiewicz equivalence classes corresponds to three operations on $\omega$-vector weighted digraphs.  The first two are reordering vertices that has the same image under the dimension function and permuting the weights of the edges from a fixed vertex. The third one is a generalization of the local complementation at a vertex $v_i$ which we called $(\sigma,k)$-local complementation since a permutation $\sigma\in S_{\omega(i)}$ and an integer $1\leq k \leq \omega(i)$ are also involved. We say that two $\omega$-vector weighted digraphs are $\omega$-equivalent if one can be obtained from the other by applying a sequence of these operations. Hence there is a bijection between the set of weakly $\Z$-equivariant homeomorphism classes of small covers over a product of simplices and the set of $\omega$-equivalence classes of acyclic $\omega$-vector weighted digraphs. Using this bijection, we give a formula for the number of weakly $\mathbb{Z}_2^n$-equivariant homeomorphism classes of small covers over a product of three simplices. These numbers are closely related with the number of permutations whose cycle decompositions have certain types. For this reason, we give some formulas involving the number of permutations of a certain type in Section \ref{sect:permutations}.

\section{Preliminaries}
\label{sect:Preliminaries}

Let $P$ be a simple convex polytope of dimension $n$ and $\mathcal{F}(P)=\{F_1, \dots,F_m\}$ be the set of facets of $P$. A function $\lambda: \mathcal{F}(P) \rightarrow \mathbb{Z}_2^n$ satisfying the non-singularity condition that
 $$F_{i_1} \cap \cdots \cap F_{i_n}\neq \emptyset \ \ \ \Rightarrow \\ \langle \lambda(F_{i_1}), \dots, \lambda(F_{i_n})\rangle=\Z$$
is called a characteristic function. For any $p \in P$, let $\Z(p)$ be the subgroup of $\Z$ generated by $\lambda(F_{i_1}), \dots, \lambda(F_{i_k})$ where the intersection $ \overset{k}{\underset{j=1}{\cap}} F_{i_j}$ is the minimal face containing $p$ in its relative interior. Then the manifold $M(\lambda)= (P \times \Z) / \sim$ where $$(p,g)\sim(q,h) \ \mathrm{if} \ p=q \ \mathrm{and} \ g^{-1}h \in \Z(p)$$ is a small cover over $P$. 

\begin{theorem}\label{thm:DJ-class}\cite{Davis-Januszkiewicz} For every small cover $M$ over $P$, there is a characteristic function $\lambda$ with $\Z$-homeomorphism $M(\lambda) \rightarrow M$ covering the identity on $P$.
\end{theorem}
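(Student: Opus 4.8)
The plan is to reverse-engineer a characteristic function from the given action and then exhibit the desired homeomorphism as the group translating a section of the orbit map.

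First I would recover $\lambda$ from $M$. Write $\pi\colon M\to P$ for the orbit map. Because the action is locally standard, every point of $\pi^{-1}(\mathrm{relint}\,F_i)$ has isotropy a rank-one subgroup of $\Z$, and this subgroup is constant over the connected set $\mathrm{relint}\,F_i$; I would set $\lambda(F_i)$ to be its unique nonzero element. More generally, local standardness says a neighborhood of any orbit is modeled on the standard representation, so over the relative interior of a codimension-$r$ face $F=F_{i_1}\cap\cdots\cap F_{i_r}$ the isotropy is the rank-$r$ subgroup $\langle \lambda(F_{i_1}),\dots,\lambda(F_{i_r})\rangle$. Specializing to a vertex $F_{j_1}\cap\cdots\cap F_{j_n}$, where the local model forces the isotropy to be all of $\Z$, shows that $\lambda(F_{j_1}),\dots,\lambda(F_{j_n})$ generate $\Z$, which is exactly the non-singularity condition. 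Thus $\lambda$ is a characteristic function, and moreover the stabilizer of every point of $\pi^{-1}(p)$ equals $\Z(p)$ for all $p\in P$.

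The heart of the argument is the construction of a continuous section $s\colon P\to M$ of $\pi$. Granting such a section, I would define
\[
f\colon M(\lambda)\longrightarrow M,\qquad f([p,g])=g\cdot s(p).
\]
Since the stabilizer of $s(p)$ is precisely $\Z(p)$ by the previous paragraph, $f$ is well defined: if $g^{-1}h\in\Z(p)$ then $g\cdot s(p)=h\cdot s(p)$. The map $f$ is manifestly continuous, $\Z$-equivariant, and satisfies $\pi\circ f([p,g])=p$, so it covers the identity on $P$. It is surjective because $s$ meets every orbit and we translate by all of $\Z$, and injective because $g\cdot s(p)=g'\cdot s(p')$ forces $p=p'$ and then $g^{-1}g'\in\mathrm{Stab}(s(p))=\Z(p)$, i.e.\ $[p,g]=[p,g']$. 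As $M(\lambda)$ is compact (a quotient of the compact space $P\times\Z$) and $M$ is Hausdorff, the continuous bijection $f$ is a homeomorphism, completing the proof.

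The main obstacle is producing the section $s$, and here the jumping isotropy blocks a black-box appeal to obstruction theory for fiber bundles, since $\pi$ is not a fiber bundle. Instead I would build $s$ by induction over the faces of $P$, from the vertices upward. Over each vertex the fiber is a single fixed point, so $s$ is forced there, and the local standard chart around a vertex supplies the canonical local section given by the image of the nonnegative orthant $\mathbb{R}^n_{\geq 0}\hookrightarrow\mathbb{R}^n$, which automatically carries the correct isotropy along each coordinate face. The only genuine work is to check that these locally defined sections can be chosen coherently on overlaps, where competing choices differ by a $\Z$-valued function on the free part of the orbit space; because $P$ and each of its faces are contractible, these discrepancies can be corrected inductively to yield a global continuous section. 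I would expect this patching step---reconciling the local orthant sections across the strata of $P$---to be the technical crux, with everything else being formal.
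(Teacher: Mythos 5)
The paper does not actually prove this statement: it is quoted verbatim from Davis--Januszkiewicz with a citation, so there is no in-paper argument to compare against. Judged on its own, your reconstruction follows the standard Davis--Januszkiewicz strategy and most of it is sound: reading off $\lambda(F_i)$ from the isotropy over $\mathrm{relint}\,F_i$, deducing non-singularity from the local standard chart at a vertex, and, given a section $s$, checking that $[p,g]\mapsto g\cdot s(p)$ is a well-defined continuous equivariant bijection from a compact space to a Hausdorff one, hence a homeomorphism. All of that is correct.

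The genuine gap is exactly where you locate it, but your proposed repair does not work as stated. Building $s$ upward from the vertices, the set of sections over $\partial F$ obtained by independently choosing sections over the lower-dimensional faces is strictly larger than the set of sections that extend over $F$: over $\mathrm{relint}\,F$ of a $k$-face the fiber is a torsor over $\Z/\Z_F\cong\mathbb{Z}_2^k$, so only $2^k$ boundary sections extend, while the boundary faces admit many more independent choices (already for a square $2$-face there are $2^4$ edge-wise choices but only $4$ extendable ones). So ``correcting the discrepancies inductively'' is a nontrivial global compatibility problem, not something that follows from contractibility of the individual faces; fixing one face's boundary data can break an adjacent face. The standard and clean route is top-down rather than bottom-up: $\pi^{-1}(\mathrm{int}\,P)\to\mathrm{int}\,P$ is a principal $\Z$-bundle over a contractible (hence simply connected) base, so it is trivial and admits a section $s_0$; then in each local standard chart $\mathbb{R}^n\to\mathbb{R}^n_{\ge 0}$ a section over the open stratum is a choice of open orthant, which extends continuously and uniquely to the closed orthant, so $s_0$ extends uniquely by continuity to all of $P$. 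Replacing your patching step by this argument closes the gap; the rest of your proof then goes through.
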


Let $\Lambda(P)$ be the set of all characteristic functions on $P$. It is well-known that certain group actions on $\Lambda(P)$ can be used to classify small covers over $P$ up to associated equivalences. For example the group $GL(n, \mathbb{Z}_2)$ acts freely on $\Lambda(P)$ by $g \cdot \lambda=g \circ \lambda$.  For any $\lambda, \lambda'$ in $\Lambda(P)$, the small covers $M(\lambda)$ and $M(\lambda')$ are Davis-Januszkiewicz equivalent if and only if there is a $g\in GL(n, \mathbb{Z}_2)$ such that $g \cdot \lambda=\lambda'$. Therefore the set of Davis-Januszkiewicz equivalence classes of small covers over $P$ corresponds bijectively to the coset $GL(n,\mathbb{Z}_2) \backslash \Lambda(P)$ by the above Theorem.

Another action on $\Lambda(P)$ that gives such a classification is the action of the group of automorphisms of the poset set $(\mathcal{F}(P), \subset)$, which is denoted by $\Aut (P))$. The group $\Aut (P))$ acts  $\Lambda(P)$ on right by $\lambda \cdot h=\lambda \circ h$. As shown in \cite{Lu-Masuda}, for any $\lambda, \lambda'$ in $\Lambda(P)$, there is a $\Z$-equivariant homeomorphism between small covers $M(\lambda)$ and $M(\lambda')$ if and only $\lambda \cdot h=\lambda'$ for some $h \in \Aut (P))$. Hence there is a bijection between the orbit space of this action and the set of $\Z$-equivariant homeomorphism classes of small covers over $P$  \cite{Lu-Masuda}. By combining this with the above Theorem, Lu and Masuda \cite{Lu-Masuda} obtain the following result.

\begin{theorem}\label{thm:weakly-class}\cite{Lu-Masuda} There is a bijection between the set of weakly $\Z$-equivariant homeomorphism classes of small covers over $P$ and the double coset $GL(n,\mathbb{Z}_2) \backslash \Lambda(P) / \Aut (P))$.
\end{theorem}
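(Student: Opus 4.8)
The plan is to realize the claimed bijection as the map sending a double coset $GL(n,\mathbb{Z}_2)\,\lambda\,\Aut (P))$ to the weakly $\Z$-equivariant homeomorphism class of $M(\lambda)$, and to verify that this map is well defined and bijective by decomposing every weakly equivariant homeomorphism into the two elementary types already understood: those covering the identity on $P$ (governed by the left $GL(n,\mathbb{Z}_2)$-action) and those that are strictly equivariant (governed by the right $\Aut (P))$-action).

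First I would record the transformation law for characteristic functions under a weakly equivariant homeomorphism. Suppose $f\colon M(\lambda)\to M(\lambda')$ is a homeomorphism together with an automorphism $\psi\in GL(n,\mathbb{Z}_2)=\mathrm{Aut}(\Z)$ satisfying $f(g\cdot x)=\psi(g)\cdot f(x)$. Because the action is locally standard, $f$ descends to a homeomorphism $\bar f\colon P\to P$ of orbit spaces that preserves the face stratification, hence determines an element $h\in\Aut (P))$. For each facet $F_i$, the isotropy subgroup along the orbit over $F_i$ is $\langle\lambda(F_i)\rangle$, and weak equivariance carries it to $\langle\lambda'(\bar f(F_i))\rangle=\langle\lambda'(h(F_i))\rangle$; since the generator of a rank-one subgroup of $\Z$ is unique, this forces $\psi\circ\lambda=\lambda'\circ h$. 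Thus $\lambda'=\psi\circ\lambda\circ h^{-1}$ lies in the double coset of $\lambda$, which establishes injectivity of the map.

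For surjectivity and well-definedness I would run the construction in reverse using the two prior results. Given $\lambda'=g\,\lambda\, h$ with $g\in GL(n,\mathbb{Z}_2)$ and $h\in\Aut (P))$, set $\lambda''=\lambda\circ h$. By the cited result of Lu--Masuda the right $\Aut (P))$-action yields a $\Z$-equivariant homeomorphism $M(\lambda)\to M(\lambda'')$, and by the Davis--Januszkiewicz description of the left action the relation $\lambda'=g\cdot\lambda''$ yields a Davis--Januszkiewicz equivalence $M(\lambda'')\to M(\lambda')$, that is, a weakly equivariant homeomorphism covering the identity on $P$. Both maps are instances of weakly equivariant homeomorphisms (a strictly equivariant map takes $\psi=\mathrm{id}$, a Davis--Januszkiewicz equivalence covers the identity), and their composite is again weakly equivariant; hence $M(\lambda)$ and $M(\lambda')$ lie in the same class and the map is well defined. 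Surjectivity then follows from Theorem \ref{thm:DJ-class}: an arbitrary small cover $M$ is $\Z$-homeomorphic to some $M(\lambda)$, so every weakly equivariant class is represented by a characteristic function.

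I expect the main obstacle to be the clean extraction of the pair $(\psi,h)$ from a weakly equivariant homeomorphism and the verification that $\psi\circ\lambda=\lambda'\circ h$ holds exactly, including checking that $\bar f$ genuinely preserves the face poset (so that $h\in\Aut (P))$) and that the induced identification of isotropy groups is compatible across all faces, not merely facets. Once this compatibility is in place, the factorization of $f$ into an $\Aut (P))$-part and a $GL(n,\mathbb{Z}_2)$-part is forced, and the double-coset description drops out by combining the two earlier bijections.
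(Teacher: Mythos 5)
Your proposal is correct and follows essentially the same route as the paper, which obtains this statement (citing Lu--Masuda) simply by combining Theorem \ref{thm:DJ-class} and the two orbit-space correspondences: the left $GL(n,\mathbb{Z}_2)$-action governing Davis--Januszkiewicz equivalence and the right $\Aut(P))$-action governing $\Z$-equivariant homeomorphism. Your injectivity argument (extracting $(\psi,h)$ from a weakly equivariant homeomorphism and reading off $\psi\circ\lambda=\lambda'\circ h$ from the isotropy data) is exactly the content of the cited Lu--Masuda result, which the paper does not reprove.
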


\section{$\omega$-vector weighted digraphs}
\label{sect:Vector_weighted_digraphs}

In \cite{Choi-Masuda-Suh}, Choi, Masuda and Suh introduce the notion of a vector matrix to associate a quasitoric manifold over a product of simplices. Given a dimension function $\omega:\{1,2,\cdots,n\} \rightarrow \mathbb{N}$, a vector matrix of size $n$ is a matrix $A=[\mathbf{v_{ij}}]$ whose entries in the $i$-th row are vectors in $\mathbb{Z}^{\omega(i)}$. We denote the $k$-th entry of $\mathbf{v_{ij}}$ by $(\mathbf{v_{ij}})_k$. Choi uses the vector matrices over $\mathbb{Z}_2$ to classify small covers over a product of simplices up to Davis-Januszkiewicz equivalences. More precisely, given a function $\omega:\{1,2,\cdots,n\} \rightarrow \mathbb{N}$, let $M_{\omega}(n)$ be the set of all vector matrices $A=[\mathbf{v_{ij}}]$ of size $n$ over $\mathbb{Z}_2$ whose entries in the $i$-th row are vectors in $\mathbb{Z}_2^{\omega(i)}$ that satisfies the following condition: Every principal minors of  $A_{k_1\cdots k_l}$  is $1$ where $A_{k_1\cdots k_n}$ is the $(n \times n)$-matrix whose $(i,j)$-th entry is $(\mathbf{v_{ij}})_{k_i}$, for any $1\leq k_i \leq n_i$ and $1\leq i \leq n$. In \cite{Main}, Choi shows that there is a bijection between the set $M_{\omega}(n)$ and the Davis-Januszkiewicz equivalence classes of small covers over a product of simplices $P=\Delta^{\omega(1)}\times \cdots\times \Delta^{\omega(n)}$. 

There is a one-to-one correspondence between acyclic digraphs with $n$ labeled vertices and the set of $\mathbb{Z}_2$-matrices of size $n$ all of whose principal minors are $1$ (Theorem 2.2, \cite{Main}). Here the correspondence is obtained by sending an acyclic digraph $G$ to the matrix $A(G)+I_n$ where $A(G)$ is the adjacency matrix of $G$.  The set of such matrices is denoted by $M(n)$. Here we introduce the notion of a $\omega$-vector weighted digraphs to generalize this bijection to $M_{\omega}(n)$. Let $\omega:\{1,2,\cdots,n\} \rightarrow \mathbb{N}$ be a function.

\begin{definition} A digraph with labeled vertices $\{v_1,\cdots, v_n\}$ is said to be $\omega$-vector weighted if every edge $(v_i,v_j)$ (if exists) is assigned with a non-zero vector $\mathbf{w_{ij}}$ in $\mathbb{Z}_2^{\omega(i)}$. 
\end{definition}

For convenience, we say that the weight of $(v_i,v_j)$ is the zero vector if and only if there is no edge from $v_i$ to $v_j$.  For the abuse of notation, we also denote the dimension of the weight vector of any edge directed from a vertex $v$ by $\omega(v)$ and we denote the weight of the edge from $u$ to $v$ by $\omega(u,v)$. Note that a $\omega$-vector weighted digraph is indeed a vector weighted digraph when the weight vectors are equidimensional. A $\omega$-vector weighted digraph is called acyclic if it does not contain any directed cycle. 

Let $G=(V,E)$ be a $\omega$-vector weighted digraph with labeled vertices $\{v_1,\cdots, v_n\}$.  We define the adjacency matrix of $G$ as a $(n \times n)$ $\omega$-vector matrix whose $(i,j)$-the entry is the zero vector $\mathbf{0} \in \mathbb{Z}_2^{\omega(i)}$ if there is no edge from $v_i$ to $v_j$ and $\mathbf{w_{ij}}$,  otherwise. We denote it by $A_{\omega}(G)$. For any $1\leq k_i \leq \omega(i)$ and $1\leq i \leq n$, $\big( A_{\omega}(G)\big)_{k_1\cdots k_l}$ is an adjacency matrix of some subgraph of the underlying acyclic digraph and hence we have the following result. 

\begin{proposition}\label{prop:w-vector} There is a one-to-one correspondence between the acyclic $\omega$-vector  weighted digraphs with labeled vertices $v_1,\cdots, v_n$ and $M_{\omega}(n)$. In particular, \begin{eqnarray} \label{formulaChoi}|M_{\omega}(n)|=\sum_{G \in \mathcal{G}_n} \prod_{v_i \in V(G)} (2^{\omega(i)}-1)^{\mathrm{outdeg}(v_i)}\end{eqnarray} where $\mathrm{outdeg}(v_i)$ is the number of edges directed from $v_i$.
 \end{proposition}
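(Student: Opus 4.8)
The plan is to realize the stated correspondence as an explicit map together with its inverse, reducing the vector-matrix condition to the scalar case of Theorem 2.2 of \cite{Main} one coordinate at a time. First I would define $\Phi$ sending an acyclic $\omega$-vector weighted digraph $G$ to the vector matrix $\Phi(G)$ whose diagonal $(i,i)$-entry is the all-ones vector $\mathbf{1}\in\mathbb{Z}_2^{\omega(i)}$ and whose off-diagonal $(i,j)$-entry is the weight $\mathbf{w_{ij}}$ of the edge $(v_i,v_j)$ (using the convention $\mathbf{w_{ij}}=\mathbf{0}$ when this edge is absent); equivalently $\Phi(G)=A_\omega(G)+I_\omega$, where $I_\omega$ has all-ones vectors on the diagonal and zeros elsewhere.

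The first substantive step is to check $\Phi(G)\in M_\omega(n)$. Fix indices $1\le k_i\le\omega(i)$. By construction the scalar slice $\big(\Phi(G)\big)_{k_1\cdots k_n}$ carries $1$ on the diagonal and, off the diagonal, a $1$ in position $(i,j)$ exactly when $(\mathbf{w_{ij}})_{k_i}=1$. Thus this slice equals $A(H)+I_n$, where $H$ is the subgraph of the underlying digraph of $G$ obtained by keeping only those edges $(v_i,v_j)$ with $(\mathbf{w_{ij}})_{k_i}=1$. Since $G$ is acyclic, so is every such $H$, and Theorem 2.2 of \cite{Main} then guarantees that every principal minor of $A(H)+I_n$ equals $1$. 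As this holds for all coordinate choices, $\Phi(G)\in M_\omega(n)$.

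Next I would produce the inverse $\Psi$. Given $A=[\mathbf{v_{ij}}]\in M_\omega(n)$, the $1\times 1$ principal minors of the slices force $(\mathbf{v_{ii}})_{k}=1$ for every $k$, so each diagonal entry is $\mathbf{1}$; I then let $\Psi(A)$ be the $\omega$-vector weighted digraph having an edge $(v_i,v_j)$ of weight $\mathbf{v_{ij}}$ precisely when $i\ne j$ and $\mathbf{v_{ij}}\ne\mathbf{0}$. That $\Phi$ and $\Psi$ invert each other entry by entry is immediate, so the only thing left to verify is that $\Psi(A)$ is acyclic, and this reverse transfer of acyclicity is the crux of the whole argument. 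Suppose $\Psi(A)$ contained a directed cycle; passing to a simple one $v_{i_1}\to v_{i_2}\to\cdots\to v_{i_r}\to v_{i_1}$, each vertex $v_{i_t}$ has a single outgoing cycle-edge of nonzero weight, so I may pick a coordinate $k_{i_t}$ with $(\mathbf{w_{i_t i_{t+1}}})_{k_{i_t}}=1$ and choose the remaining $k_i$ arbitrarily. For this coordinate vector the slice satisfies $A_{k_1\cdots k_n}=A(H)+I_n$ for the digraph $H$ that retains every cycle-edge, so $H$ contains the cycle and is not acyclic; by Theorem 2.2 of \cite{Main} this slice cannot have all principal minors equal to $1$, contradicting $A\in M_\omega(n)$. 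Hence $\Psi(A)$ is acyclic and the correspondence is a bijection.

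Finally I would deduce the counting formula. Under the bijection, specifying an element of $M_\omega(n)$ amounts to specifying an acyclic $\omega$-vector weighted digraph, which is an acyclic digraph $G\in\mathcal{G}_n$ together with a choice of nonzero weight for each of its edges. The weight of an edge directed from $v_i$ ranges over the $2^{\omega(i)}-1$ nonzero vectors of $\mathbb{Z}_2^{\omega(i)}$, independently across edges, and $v_i$ emits $\mathrm{outdeg}(v_i)$ such edges; summing the product $\prod_{v_i\in V(G)}(2^{\omega(i)}-1)^{\mathrm{outdeg}(v_i)}$ over all $G\in\mathcal{G}_n$ yields (\ref{formulaChoi}).
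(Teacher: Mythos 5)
Your proof is correct and follows essentially the same route as the paper: the paper's entire argument is the one-line observation that each coordinate slice of $A_\omega(G)+[\mathbf{\delta_{ij}}]$ is the matrix of a subgraph of the underlying acyclic digraph, combined with Choi's Theorem 2.2 and the obvious edge-weight count. The only difference is that you also spell out the converse step (that a cyclic weighted digraph would produce a slice outside $M(n)$, so every element of $M_\omega(n)$ really does come from an \emph{acyclic} weighted digraph), which the paper leaves implicit; this is a welcome completion of the same argument rather than a different one.
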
 

The above formula was first obtained by Choi \cite{Main}. To obtain this formula, Choi counts the preimages of the function $\phi: M_{\omega}(n) \rightarrow M(n)$ which sends $A=[\mathbf{w_{ij}}]$ to an $(n\times n)$-matrix whose $(i,j)$-th entry is $0$ if $w_{ij}$ is the zero vector and $1$, otherwise. Now we give two simple observations about the matrices whose principal minors are $1$.
\begin{lemma}
	If $V=[v_{ij}]$ is an element of $M(n)$ then $$\displaystyle \sum_{\substack{\sigma\in S_n \\ Fix(\sigma)=\emptyset}}v_{1\sigma(1)}v_{2\sigma(2)}\dots v_{n\sigma(n)}=0.$$
\end{lemma}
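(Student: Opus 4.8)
The plan is to recognize the given sum as the ``fixed-point-free'' portion of the determinant expansion of $V$ and then to exploit that, over $\mathbb{Z}_2$, \emph{every} principal minor of $V$ equals $1$. Throughout, all arithmetic is in $\mathbb{Z}_2$, so the sign of a permutation is invisible: for any subset $S\subseteq\{1,\dots,n\}$, writing $V_S$ for the principal submatrix on the rows and columns indexed by $S$, we have
$$\det(V_S)=\sum_{\sigma\in \mathrm{Sym}(S)}\ \prod_{i\in S} v_{i\sigma(i)}.$$
First I would record the one structural fact I need beyond the minors hypothesis: taking $1\times 1$ principal minors shows $v_{ii}=1$ for every $i$.

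The heart of the argument is a decomposition of each $\det(V_S)$ according to the fixed-point set of $\sigma$. Writing $D(V_S)=\sum_{\sigma\in\mathrm{Sym}(S),\,\mathrm{Fix}(\sigma)=\emptyset}\prod_{i\in S}v_{i\sigma(i)}$ for the derangement sum on $S$ (so that the quantity in the statement is exactly $D(V_{\{1,\dots,n\}})$), I would split any $\sigma\in\mathrm{Sym}(S)$ into its set of fixed points $F$ and a derangement of $S\setminus F$. Since each fixed point contributes a factor $v_{ii}=1$, this yields the clean identity
$$\det(V_S)=\sum_{T\subseteq S} D(V_T),$$
valid for every $S$, where $D(V_\emptyset)=1$ by the empty-product convention.

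To finish I would invert this relation. Because the M\"obius function of the Boolean lattice is $(-1)^{|S\setminus T|}$, which is $1$ in $\mathbb{Z}_2$, M\"obius inversion collapses to $D(V_S)=\sum_{T\subseteq S}\det(V_T)$. Applying this with $S=\{1,\dots,n\}$ and using that every principal minor $\det(V_T)$ equals $1$ (including the empty one), the right-hand side is a sum of $2^n$ ones, hence
$$D(V_{\{1,\dots,n\}})=\sum_{T\subseteq\{1,\dots,n\}}1=2^n=0 \quad\text{in }\mathbb{Z}_2,$$
which is the assertion. Equivalently, one can bypass M\"obius inversion and run an induction on $|S|$: the base case $|S|=1$ has no derangements, and in the identity $\det(V_S)=D(V_\emptyset)+D(V_S)+\sum_{\emptyset\neq T\subsetneq S}D(V_T)$ the middle sum vanishes by induction, forcing $D(V_S)=\det(V_S)-1=0$.

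The step I expect to be the main obstacle is not any single computation but getting the fixed-point decomposition and its bookkeeping exactly right: one must be sure that the sign of $\sigma$ genuinely drops out mod $2$, that the fixed coordinates contribute precisely $1$ (this is where $v_{ii}=1$ is essential), and that the empty permutation is counted as the unique derangement of the empty set. Once that identity is in place, the reduction to the trivial count $2^n\equiv 0$ is immediate.
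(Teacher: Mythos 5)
Your proof is correct and is essentially the paper's argument: both partition the permutation sum in $\det(V_S)$ by fixed-point sets, use $v_{ii}=1$ to reduce each block to a derangement sum over a principal submatrix (which again lies in $M(|T|)$), and conclude by induction that the full derangement sum equals $\det(V)-1=0$. Your M\"obius-inversion phrasing $D(V_S)=\sum_{T\subseteq S}\det(V_T)=2^n=0$ is just a closed-form repackaging of the same decomposition, and your fallback induction at the end coincides with the paper's proof.
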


\begin{proof} We prove by induction on $n.$ The case $n=2$ is trivial. 
The determinant of the matrix $V=[v_{ij}]$ satisfies
	\begin{eqnarray*}
		det \text{ }V= 1&=&
\sum_{\sigma\in S_n}v_{1\sigma(1)}\dots v_{n\sigma(n)}=
1+ \sum_{I\subsetneqq\{1,\dots,n\}}\sum_{\substack{\sigma\in S_n \\ Fix(\sigma)=I}}v_{1\sigma(1)}\dots v_{n\sigma(n)}
	\end{eqnarray*}
and hence
\begin{equation}\label{eqn:1}
	\sum_{I\subsetneqq\{1,\dots,n\}}\sum_{\substack{\sigma\in S_n \\ Fix(\sigma)=I}}v_{1\sigma(1)}\dots v_{n\sigma(n)}=0
\end{equation}
For  $I\neq \emptyset,$ let $\{1,2,\dots,k\}\setminus I=\{j_1,\dots, j_m\}$ with $1\leq j_1<j_2<\dots<j_m\leq n.$ Since $v_{ii}=1$ for all $i,$ we have 

\begin{eqnarray*}
	\sum_{\substack{\sigma\in S_n \\ Fix(\sigma)=I}}v_{1\sigma(1)}\dots v_{n\sigma(n)}&=& 
\sum_{\substack{\beta\in S_m \\ Fix(\beta)=\emptyset}}v'_{1\beta(1)}\dots v'_{m\beta(m)},
\end{eqnarray*}
where $\displaystyle V'=[v_{j_pj_q}]_{1\leq p,q\leq m}$. Here the correspondence between $\sigma$  and $\beta$ is given as follows: $\beta(a)=b$ if and only if $\sigma(j_a)=j_b.$ Since $V'$ is the reduced submatrix of $V$ obtained by removing the rows and columns corresponding to $I,$  $V'\in M(m).$ Hence
the above sum is equal to zero by the induction hypothesis. Then the equation (\ref{eqn:1}) yields the desired result.

\end{proof}

\begin{proposition}\label{prop:productofvs}
	For $n\geq 3,$ let $V=[v_{ij}]\in M(n).$ Then for every $\ B\subseteq\{1,2,\dots,n\}$ with $0\leq |B|\leq n-2$ and for any $i\in \{1,2,\dots,n\}\setminus B,$ the following holds
	\begin{eqnarray}\label{eqn:2}
		\sum_{\{a_1,\dots,a_{n-|B|-1}\}=\{1,\dots,n\}\setminus(B\cup\{i\})}v_{ia_1}v_{a_1a_2}\dots v_{a_{n-|B|-2}a_{n-|B|-1}}v_{a_{n-|B|-1}i}=0
	\end{eqnarray}
\end{proposition}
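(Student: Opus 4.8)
The plan is to read the left-hand side of (\ref{eqn:2}) as a sum over directed Hamiltonian cycles on the vertex set $S=\{1,\dots,n\}\setminus B$, reindex it by cyclic permutations, and then extract it from the derangement identity of the preceding Lemma by strong induction on $|S|$.

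Write $m=n-|B|=|S|\geq 2$. For a subset $S$ with $|S|\geq 2$, let $\mathrm{Cyc}(S)$ denote the permutations of $S$ that consist of a single $|S|$-cycle, and set
$$C(S)=\sum_{\sigma\in\mathrm{Cyc}(S)}\ \prod_{j\in S}v_{j\sigma(j)}.$$
First I would check that the left-hand side of (\ref{eqn:2}) is exactly $C(S)$: an ordering $(a_1,\dots,a_{m-1})$ of $S\setminus\{i\}$ corresponds bijectively to the $m$-cycle $\sigma$ with $\sigma(i)=a_1$, $\sigma(a_t)=a_{t+1}$, and $\sigma(a_{m-1})=i$, under which $v_{ia_1}v_{a_1a_2}\cdots v_{a_{m-1}i}=\prod_{j\in S}v_{j\sigma(j)}$. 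In particular the sum is independent of the base vertex $i$, and it suffices to prove $C(S)=0$ for every $S$ with $|S|\geq 2$.

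Then I would run strong induction on $m=|S|$. Restricting $V$ to the rows and columns indexed by $S$ gives a principal submatrix $V_S$ whose principal minors are principal minors of $V$, hence all equal to $1$, so $V_S\in M(m)$. For the base case $|S|=2$, say $S=\{i,j\}$, we have $C(S)=v_{ij}v_{ji}$; the $2\times 2$ principal minor on $\{i,j\}$ equals $v_{ii}v_{jj}-v_{ij}v_{ji}=1-v_{ij}v_{ji}$ and must be $1$, forcing $v_{ij}v_{ji}=0$. For the inductive step, apply the Lemma to $V_S\in M(m)$, giving
$$\sum_{\substack{\sigma\in \mathrm{Sym}(S)\\ \mathrm{Fix}(\sigma)=\emptyset}}\ \prod_{j\in S}v_{j\sigma(j)}=0.$$
A fixed-point-free permutation of $S$ is precisely a partition of $S$ into blocks of size at least $2$ together with a single cyclic permutation of each block, and the product factors across blocks; collecting the cyclic sums over each block rewrites the identity as $\sum_{P}\prod_{T\in P}C(T)=0$, the sum ranging over partitions $P$ of $S$ into blocks of size at least $2$. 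The one-block partition contributes exactly $C(S)$, while any partition with at least two blocks has every block of size between $2$ and $m-1$, so by the induction hypothesis each factor $C(T)$ vanishes and the whole product is $0$. Hence $C(S)=0$, which closes the induction.

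The one step that needs genuine care is the combinatorial identity rewriting the derangement sum as $\sum_{P}\prod_{T\in P}C(T)$: one must verify that grouping the cycles of a fixed-point-free permutation into its cycle-support partition is a clean bijection (each block is cyclically permuted in all $(|T|-1)!$ ways, with no overcounting) and that $\prod_{j\in S}v_{j\sigma(j)}$ factors as the product of the per-block cyclic products. Once this bookkeeping is in place, the vanishing is forced termwise by the inductive hypothesis, so this is the crux; everything else is routine.
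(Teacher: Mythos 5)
Your proof is correct and follows essentially the same route as the paper's: both arguments restrict to the principal submatrix indexed by $S=\{1,\dots,n\}\setminus B$ (which stays in $M(|S|)$), induct, and extract the Hamiltonian-cycle sum from the derangement identity of the preceding Lemma. Your explicit rewriting of the derangement sum as $\sum_{P}\prod_{T\in P}C(T)$ over partitions into blocks of size at least $2$ is exactly the bookkeeping hiding behind the paper's terse claim that the case $B=\emptyset$ ``follows from the above Lemma,'' so your version is, if anything, the more complete write-up of the same idea.
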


\begin{proof}
	We prove by induction on $n.$ The case $n=3$ and $|B|=1$ is easily follows. Indeed, without lost of generality, we can assume $b_1=1$ and $i=2.$ Then the left hand side of (\ref{eqn:2}) is $v_{23}v_{32}$ that is a principal minor and hence is equal to zero. 
Let $B
\subset \{1,\dots, n\}
$ with $|B| \leq n-2$ and $i\in \{1,\dots,n\}\setminus B.$ If $B=\emptyset$, the result follows from the above Lemma. Otherwise, fix $b \in B$. Then the left hand side of (\ref{eqn:2}) is of the form
	\begin{eqnarray*}
		\sum_{\{a_1,\dots,a_{(n-1)-(\lambda-1)-1}\}=\{1,2,\dots,\hat{b},\dots,n\}\setminus(B\cup \{i\}\setminus \{b\})} v_{ia_1}v_{a_1a_2}\dots v_{a_{n-\lambda-2}a_{n-\lambda-1}}v_{a_{n-\lambda-1}i}.
	\end{eqnarray*}
Since the principal minors of the matrix obtained by removing the row $b$ and the column $b$ of $V$ is also $1,$ the above sum is zero by induction hypothesis. 

\end{proof}

\section{Small covers over a product of simplices}
\label{sect:SCoverProductofSimp}

Güçlükan İlhan \cite{GuclukanIlhan} shows that there is a bijection between the set of weakly $\mathbb{Z}_2$-equivariant homeomorphism classes of small covers over an $n$-cube and the set of acyclic digraphs with $n$-labeled vertices up to local complementation and reordering vertices. In this section, we generalize this result to a product of simplices using the Therorem \ref{thm:weakly-class}. The automorphism group of a product of simplices depends not only on the dimension of the simplices but also the number of equidimensional simplices appearing in the product (see \cite{Altunbulak-GuclukanIlhan}). For this reason, we let
$$P=\underset{i=1}{\overset{l}{\prod}} P_i, \ \mathrm{where} \ P_i=\Delta_1^{n_i}\times \cdots \times \Delta_{m_i}^{n_i},$$ with $1\leq n_1 <n_2 <\cdots <n_l$ and $\underset{i=1}{\overset{l}{\sum}} n_i m_i=n$. Here the set of facets of $P_i$ is
$$\{f^i_{j,k}=\Delta^{n_i}_1 \times \cdots \times \Delta^{n_{i}}_{j-1}\times \tilde{f}^i_{j, k}\times \Delta^{n_{i}}_{j+1} \times \cdots \times \Delta^{n_i}_{m_i} | \ 1 \leq k \leq n_i+1, \ 1 \leq j \leq m_i \}$$ where $\{\tilde{f}_{j,0}^i,\dots, \tilde{f}^i_{j, n_i}\}$ is the set of facets of the simplex $\Delta^{n_i}_j$. Therefore the set of facets of $P$ is given by
$$\mathcal{F}(P)=\{F_{j,k}^i| \ 1\leq k \leq n_i+1, \ 1\leq j \leq m_i, \ 1\leq i\leq l\}$$
where $F_{j,k}^i=P_1\times \cdots \times P_{i-1} \times f_{j,k}^i \times P_{i+1}\times \cdots \times P_l$. Note that, there are $(n+m)$-facets, where $m=\overset{l}{\underset{i=1}{\sum}}m_i$. The automorphism group of $P$ is $\overset{l}{\underset{i=1}{\prod}} \Big(S_{n_i+1}\wr S_{m_i}\Big)$ where
 $S_{n_i+1}\wr S_{m_i}$ is the wreath product of $S_{n_{i+1}}$ with $S_{m_i}$, where $\sigma \in S_{n_{i+1}}$ sends $F^i_{j,k}$ to $F^i_{j, \sigma(k)}$ and $\mu \in S_{m_i}$ sends $F^i_{j,k}$ to $F^i_{\mu(j),k}$ (see Lemma 3.2 in  \cite{Altunbulak-GuclukanIlhan}). 
 

Let $\displaystyle M^{i}_{j,k}=\Big(\sum_{\alpha=1}^{i-1}m_\alpha n_\alpha\Big)+(j-1)n_i+k$ and $\displaystyle N^{i}_j=\Big(\sum_{\alpha=1}^{i-1}m_\alpha \Big)+j$ where $1\leq i\leq l,$ $1\leq j\leq m_i$ and $1\leq k\leq n_i$. To every characteristic function $\lambda$ over $P$, one can associate an $(n \times (n+m))$ matrix $\Lambda$ whose $M^{i}_{j,k}$-th column is $\lambda(F^i_{j,k})$ and $(n+N^i_j)$-th column is $\lambda(F^i_{j,n_{i}+1})$ .  By reordering the facets and choosing a basis, we can choose a representative of the orbit of $\Lambda$ of the form $(I_n| \Lambda_{\ast})$ where $\Lambda_{\ast}$ is an $(n \times m)$-matrix. Following Choi \cite{Main}, we call $\Lambda_{\ast}$ the reduced submatrix of $\lambda$. As shown in \cite{Main}, $\Lambda_{\ast}$ can be seen as an element of $M_{\omega}(m)$ where $\omega(t)=n_i$ if $t=N^i_j$ for some $1\leq j \leq n_i$. We call the function $\omega$ defined in this way the dimension function of $P$. Since there is a bijection between the Davis-Januszkiewicz equivalence classes of small covers over $P$ and the reduced submatrices, the following result directly follows from the Proposition \ref{prop:w-vector}.
\begin{corollary} The set of Davis-Januszkiewicz equivalence classes of  small covers over $P$ are in one-to-one correspondence with the set of $\omega$-vector weighted acyclic digraphs with $n$ labeled vertices where $\omega$ is the dimension function of $P$. \end{corollary}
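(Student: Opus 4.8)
The plan is to obtain the correspondence by composing two bijections that have already been assembled in the excerpt. For the first, I would start from Theorem~\ref{thm:DJ-class} and the remarks following it, which identify the Davis--Januszkiewicz equivalence classes of small covers over $P$ with the coset $GL(n,\mathbb{Z}_2)\backslash\Lambda(P)$. I would then pass from this coset to the set of reduced submatrices. Concretely, given $\lambda\in\Lambda(P)$, form the $(n\times(n+m))$-matrix $\Lambda$ whose columns are ordered by the indices $M^i_{j,k}$ and $n+N^i_j$ as above, and use the free $GL(n,\mathbb{Z}_2)$-action to bring the $n$ columns indexed by the $M^i_{j,k}$ into the identity block, so that the orbit acquires a representative of the form $(I_n\mid\Lambda_\ast)$. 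The content here, due to Choi~\cite{Main}, is that $\Lambda_\ast$ is a well-defined invariant of the orbit and that $\lambda\mapsto\Lambda_\ast$ descends to a bijection between $GL(n,\mathbb{Z}_2)\backslash\Lambda(P)$ and $M_\omega(m)$, where $\omega$ is the dimension function of $P$.

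For the second bijection I would simply invoke Proposition~\ref{prop:w-vector}, applied with the number of vertices equal to $m$, the number of simplex factors appearing in $P$. It supplies a one-to-one correspondence between $M_\omega(m)$ and the set of acyclic $\omega$-vector weighted digraphs with $m$ labeled vertices $v_1,\dots,v_m$. Composing this with the bijection of the previous paragraph yields the desired correspondence, and the corollary follows.

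The only step carrying genuine content is the middle identification of reduced submatrices with $M_\omega(m)$, and this is where I would expect the main difficulty to lie were it not already available in \cite{Main}. Two points must be checked. First, that every $GL(n,\mathbb{Z}_2)$-orbit contains a representative $(I_n\mid\Lambda_\ast)$ with $\Lambda_\ast$ uniquely determined: this uses that the $n$ columns selected for the identity block are the characteristic vectors of $n$ facets meeting at a vertex of $P$, so the non-singularity condition forces them to be a basis, while freeness of the action pins down the remaining columns. Second, that the non-singularity condition on $\lambda$ is equivalent to the defining condition of $M_\omega(m)$, namely that each principal minor of every matrix $(\Lambda_\ast)_{k_1\cdots k_m}$ equals $1$. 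Both are established in \cite{Main}, so I would cite them rather than reproduce the verification, after which the corollary is immediate from Proposition~\ref{prop:w-vector}.
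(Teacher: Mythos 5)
Your proposal is correct and follows essentially the same route as the paper: it identifies Davis--Januszkiewicz classes with reduced submatrices in $M_\omega(m)$ via Choi's result and then applies Proposition~\ref{prop:w-vector} to pass to acyclic $\omega$-vector weighted digraphs. You also rightly apply the proposition with $m$ (the number of simplex factors) vertices, which is what the corollary intends despite its statement reading ``$n$ labeled vertices.''
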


An arbitrary element $g \in \Aut (P)) $ can be written as a product of elements of the form $$(1_1,\cdots,1_{i-1}, (\mathrm{id}^i_1,\cdots,\mathrm{id}^i_{j-1}, \sigma_j^i, \mathrm{id}^{i}_{j+1},\cdots, \mathrm{id}^i_{m_i}; \mathrm{id}^i),1_{i+1},\cdots,1_{l})$$ and $$(1_1,\cdots,1_{i-1},(\mathrm{id}^i_1,\cdots, \mathrm{id}^i_{m_i}; \mu^i),1_{i+1},\cdots,1_{l})$$ where $\sigma^i_j\in S_{n_i+1}$ and $\mu^i \in S_{m_i}$. With the abuse of notation, we also denote these elements by $\sigma^i_j$ and $\mu^i$. Here $1_i$, $\mathrm{id}^i_j$, and $\mathrm{id}^i$  are the identity elements in $S_{n_i+1}\wr S_{m_i}$, $S_{n_i+1}$, and $S_{m_i}$, respectively. Let $G=(V,E)$ be an acyclic $\omega$-vector weighted digraph with labeled vertices $v_1,\cdots, v_m$  with weights $\mathbf{w_{pq}}$. Clearly, the corresponding action of  $\mu^i$ sends $G$ to an acyclic $\omega$-vector weighted digraph obtained by reordering the vertices $\{v_p| N^i_1\leq p\leq N^i_{m_i} \}$ by $(\mu^i)^{-1}$ where the weight of the edge $(v_p,v_q)$ in the resulting graph is $\mathbf{w_{\mu(p)\mu(q)}}$.

 When all the simplices are $1$-dimensional, $\sigma^i_j$ is an element of a cyclic group of order $2$ and when it is non-trivial, the corresponding action on the acyclic digraphs is the local complementation at vertex $v_{N^i_j}$ (see \cite{GuclukanIlhan}). The local complementation of a digraph $G=(V(G),E(G))$ at vertex $v$ is the digraph $G \ast v$ with $V(G \ast v)=V(G)$ and $E(G\ast v)$ is the symmetric difference of sets $E(G)$ and $\{(u,w)| \ (u,w) \in N^-_G(v)\times N^+_G(v)\}$ where $N_G^{+}(v)$ is the set of all out-neighbors of $v$ and $N_G^-(v)$ is the set of all in-neighbors of $v$. This notion can be easily generalized to $\omega$-vector weighted digraphs by letting $E(G\ast v)=E(G) \cup N^-_G(v)\times N^+_G(v)   \backslash \{(u,w)  \in N^-_G(v)\times N^+_G(v)| \omega(u,w)+\omega(u,v)=\mathbf{0} \}$ where the weight of the edge $(u,w)$ is $\omega(u,w)+\omega(u,v)$ if $(u,w)\in N^-_G(v)\times N^+_G(v)$ and is $\omega(u,w) $, otherwise. 
 If we assume that weight of the edges in the (unweighted) digraphs  are $1 \in \mathbb{Z}_2^1$, this definition agrees with the standard definition of the local complementation given above. Note that any $\sigma \in S_{\omega(v)}$ also acts on $\omega$-vector weighted digraphs that contains $v$ as a vertex by permuting the coordinates of the weights of the edges from $v$. Now we introduce two generalizations which allow permutation of coordinates of the associated weights.

\begin{definition} Let $v$ be a vertex of a $\omega$-vector weighted digraph $G$ and $\sigma$ be a permutation in $S_{\omega(v)}$. The $\sigma$-local complementation of $G$ at vertex $v$ is obtained by permuting the weights of edges from $v$ in $G\ast v$ by $\sigma$.  We denote the obtained $\omega$-vector weighted digraph by $G\ast_{\sigma} v$.
\end{definition}

\begin{definition} Let $v$ be a vertex of a $\omega$-vector weighted digraph $G$ and $\sigma \in S_{\omega(v)}$. For any $1\leq k \leq \omega(v)$, the $(\sigma,k)$-local complementation of $G$ at vertex $v$ is the $\omega$-vector weighted digraph $G \underset{(\sigma,k)}{\ast}v$ where $V(G \underset{(\sigma,k)}{\ast}v)=V(G)$ and the edge set of $G \underset{(\sigma,k)}{\ast}v$ is the union of sets $E(G)$ and $\{(u,w) \in  N^-_G(v)\times N^+_G(v) | \ (\omega(v,w))_k =1 \ \text{and} \ \omega(u,w)+\omega(u,v)\neq \mathbf{0} \}$. The weight of the edge $(u,w)$ is given by
\begin{enumerate}
\item[i)] $\omega(u,w)+\omega(u,v)$ if $(u,w) \in  N^-_G(v)\times N^+_G(v)$ and $ (\omega(v,w))_k =1$,
\item[ii)] $\sigma \cdot \omega(v,w)$ if $u=v$ and $(\omega(v,w))_k =0$,
\item[iii)] $\sigma \cdot \omega(v,w)+e_{\sigma^{-1}(k)}$ if $u=v$ and $(\omega(v,w))_k =1$ where $e_i$ is the vector in $\mathbb{Z}_2^{\omega(v)}$ all of whose coordinates are 1 except the $i$-th one,
\item[iv)] $\omega(u,w)$, otherwise.
\end{enumerate} 
\end{definition}

\begin{example}

Let $\omega:\{1,2,3,4\} \rightarrow \mathbb{N}$ be defined by $\omega(1)=2$, $\omega(2)=\omega(3)=\omega(4)=3$. Let $G$ be the  $\omega$-vector weighted digraph given below
\begin{figure}[h]
	\centering
\begin{tikzpicture}[
	> = stealth, 
	shorten > = 1pt, 
	auto,
	node distance = 3cm, 
	semithick 
	]
	
	\tikzstyle{every state}=[
	draw = black,
	thick,
	fill = white,
	minimum size = 3mm
	]
	
	\node[state] (v1) {$v_1$};
	\node[state] (v2) [above right of=v1] {$v_2$};
	\node (v) [right of=v1]{} ;
	\node[state] (v3) [above right of=v] {$v_3$};
	\node[state] (v4) [right of=v] {$v_4$};
	\node(a) at (3,-1) {$G$} ;
	
	\path[->] (v1) edge node {$\big(\begin{smallmatrix}
		1 \\
		0 
		\end{smallmatrix}\big)$} (v2);

	\path[->] (v4) edge node [near start, swap] {$\Big(\begin{smallmatrix}
		1 \\
		0 \\
		1
		\end{smallmatrix}\Big)$} (v3);
	\path[->] (v1) edge node [swap] {$\big(\begin{smallmatrix}
		1 \\
		1 
		\end{smallmatrix}\big)$} (v4); 
 \path[->] (v4) edge node [above, pos=0.5] {$\Big(\begin{smallmatrix}
 	1 \\
 	1\\
 	1 
 	\end{smallmatrix}\Big)$} (v2);  
	\end{tikzpicture}
\end{figure}

\vspace{0.2 cm}

For $\sigma=(123)\in S_3$, the  $\sigma$-local complementation of $G$ and the $(\sigma,2)$-local complementation of $G$ at vertex $v_4$ are given in the Figure \ref{fig1}.
\begin{figure}[h]
	\centering
	\begin{tikzpicture}[
	> = stealth, 
	shorten > = 1pt, 
	auto,
	node distance = 3cm, 
	semithick 
	]
	
	\tikzstyle{every state}=[
	draw = black,
	thick,
	fill = white,
	minimum size = 3mm
	]
	
	\node[state] (v1) {$v_1$};
	\node[state] (v2) [above right of=v1] {$v_2$};
	\node (v) [right of=v1]{} ;
	\node[state] (v3) [above right of=v] {$v_3$};
	\node[state] (v4) [right of=v] {$v_4$};
		\node(a) at (3,-1) {$G \underset{\sigma}{\ast} v_4$} ;

	\node[state] (v6)[right of=v4] {$v_1$};
	\node[state] (v7) [above right of=v6] {$v_2$};
	\node (w) [right of=v6]{} ;
	\node[state] (v8) [above right of=w] {$v_3$};
	\node[state] (v9) [right of=w] {$v_4$};
	\node(b) at (13,-1) {$G \underset{(\sigma,2)}{\ast} v_4$} ;

	\path[<-] (v3) edge node [below, pos=0.45] {$\Big(\begin{smallmatrix}
		1 \\
		1 
		\end{smallmatrix}\Big)$} (v1);
	
	\path[->] (v4) edge node [near start, swap] {$\Big(\begin{smallmatrix}
		0 \\
		1 \\
		1
		\end{smallmatrix}\Big)$} (v3);
	\path[->] (v1) edge node [swap] {$\big(\begin{smallmatrix}
		1 \\
		1 
		\end{smallmatrix}\big)$} (v4); 
		\path[->] (v1) edge node [] {$\big(\begin{smallmatrix}
		0 \\
		1 
		\end{smallmatrix}\big)$} (v2); 
	\path[->] (v4) edge node [above, pos=0.35] {$\Big(\begin{smallmatrix}
		1 \\
		1\\
		1 
		\end{smallmatrix}\Big)$} (v2);

	\path[->] (v9) edge node[near start, swap] {$\Big(\begin{smallmatrix}
		0 \\
		1 \\
		1
		\end{smallmatrix}\Big)$} (v8);
	\path[->] (v6) edge node {$\big(\begin{smallmatrix}
		1 \\
		1 
		\end{smallmatrix}\big)$} (v9); 
	\path[->] (v9) edge node [above, pos=0.35] {$\Big(\begin{smallmatrix}
		1 \\
		0\\
		0 
		\end{smallmatrix}\Big)$} (v7);
		\path[->] (v6) edge node [] {$\big(\begin{smallmatrix}
		0 \\
		1 
		\end{smallmatrix}\big)$} (v7); 
	\end{tikzpicture}
     \caption{}
    	\label{fig1}
\end{figure}
\end{example}
Let $\overline{\phantom{a}}: S_{n+1} \to S_n$ be defined by $\overline{\sigma}(t)=\sigma(t)$ if $\sigma(t) \neq n+1$, and $\overline{\sigma}(t)=\sigma(n+1)$, otherwise. 

\begin{lemma} If $\sigma^i_j$ fixes $n_{i}+1$ then $\sigma^i_j$ acts by permuting the weights of the edges from $v_{N^i_j}$. If $\sigma^i_j(n_i+1)\neq n_i+1$ then $\sigma^i_j$ act as $(\overline{\sigma}^i_j, \sigma^i_j(n_i+1))$-local complementation at the vertex $v_{N^i_j}$.
\end{lemma}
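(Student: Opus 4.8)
The plan is to track explicitly how the column operation induced by $\sigma=\sigma^i_j$ transforms the reduced submatrix $\Lambda_\ast$, and then read off the effect on the associated $\omega$-vector weighted digraph. Throughout write $v=v_{N^i_j}$, let $\varepsilon_p$ denote the $p$-th standard basis vector of $\mathbb{Z}_2^n$, and recall that in the reduced representative $(I_n\mid\Lambda_\ast)$ one has $\lambda(F^i_{j,k})=\varepsilon_{M^i_{j,k}}$ for $1\le k\le n_i$, while $\lambda(F^{i'}_{j',n_{i'}+1})$ is the $N^{i'}_{j'}$-th column of $\Lambda_\ast$. Expanding that column in the basis $\{\varepsilon_{M^{i''}_{j'',k''}}\}$, its coordinate along $\varepsilon_{M^{i''}_{j'',k''}}$ is the $k''$-th entry of the weight $\mathbf{w}_{N^{i''}_{j''},N^{i'}_{j'}}$; the diagonal entry $\mathbf{w}_{N^i_j,N^i_j}$ is the all-ones vector because the $1\times 1$ principal minors of every $A_{k_1\cdots k_m}$ equal $1$. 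In particular $\lambda(F^i_{j,n_i+1})=\sum_{k=1}^{n_i}\varepsilon_{M^i_{j,k}}+\mathbf{u}$, where $\mathbf{u}$ collects the contributions from blocks $(i'',j'')\neq(i,j)$ and records precisely the edges into $v$. Since $\lambda\cdot\sigma$ is again a characteristic function and the facets $\{F^{i''}_{j'',k''}:k''\le n_{i''}\}$ still meet at a common vertex of $P$, the first $n$ columns of the matrix of $\lambda\cdot\sigma$ remain a basis, so a unique $\phi\in GL(n,\mathbb{Z}_2)$ returns it to reduced form; the whole computation amounts to determining $\phi$ and applying it to the last $m$ columns.

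For the case $\sigma(n_i+1)=n_i+1$ this is immediate: here $\sigma$ permutes only $\{F^i_{j,k}:k\le n_i\}$, so the first $n$ columns of $\lambda\cdot\sigma$ are standard basis vectors, and $\phi$ is the permutation fixing every $\varepsilon_p$ outside block $(i,j)$ and sending $\varepsilon_{M^i_{j,\sigma(k)}}\mapsto\varepsilon_{M^i_{j,k}}$. Applying $\phi$ permutes the block-$(i,j)$ coordinates of every last column by $\sigma$ and fixes all others; since the block-$(i,j)$ coordinates of the $N^{i'}_{j'}$-th column are exactly $\mathbf{w}_{N^i_j,N^{i'}_{j'}}$, this is precisely permutation of the weights of the edges out of $v$, which is the first assertion.

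The substantive case is $r:=\sigma(n_i+1)\neq n_i+1$. Writing $k_0=\sigma^{-1}(n_i+1)$ (so $k_0\le n_i$), after applying $\sigma$ the facet $F^i_{j,k_0}$ now carries the old special vector $\lambda(F^i_{j,n_i+1})$ while the new special facet $F^i_{j,n_i+1}$ carries $\varepsilon_{M^i_{j,r}}$. The key step is to solve for $\phi$: demanding $\phi\bigl((\lambda\cdot\sigma)(F^i_{j,k})\bigr)=\varepsilon_{M^i_{j,k}}$ on all first-$n$ facets forces $\phi$ to be the identity off block $(i,j)$, $\phi(\varepsilon_{M^i_{j,s}})=\varepsilon_{M^i_{j,\sigma^{-1}(s)}}$ for $s\neq r$, and — using the relation $\phi(\lambda(F^i_{j,n_i+1}))=\varepsilon_{M^i_{j,k_0}}$ together with the all-ones diagonal — $\phi(\varepsilon_{M^i_{j,r}})=\sum_{t=1}^{n_i}\varepsilon_{M^i_{j,t}}+\mathbf{u}=\lambda(F^i_{j,n_i+1})$. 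Applying this $\phi$ to the last $m$ columns then produces two kinds of changes. On the $N^i_j$-th column it returns the old column, so edges into $v$ are unchanged; on a column $N^{i'}_{j'}$ with $(i',j')\neq(i,j)$, writing $w=v_{N^{i'}_{j'}}$ and $w_s=(\omega(v,w))_s$, the term $w_r\,\mathbf{u}$ adds $\mathbf{w}_{N^{i''}_{j''},N^i_j}$ to the edge from the in-neighbour $v_{N^{i''}_{j''}}$ to $w$ exactly when $w_r=1$, reproducing clause (i); and the block-$(i,j)$ coordinates of that column become the vector whose $k_0$-entry is $w_r$ and whose $t$-entry ($t\neq k_0$) is $w_r+w_{\sigma(t)}$, which one checks (with the convention $(\overline{\sigma}\cdot\omega(v,w))_t=w_{\overline{\sigma}(t)}$) equals $\overline{\sigma}\cdot\omega(v,w)$ when $w_r=0$ and $\overline{\sigma}\cdot\omega(v,w)+e_{\overline{\sigma}^{-1}(r)}$ when $w_r=1$, reproducing clauses (ii)--(iii). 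Thus $\lambda\cdot\sigma$ corresponds to $G\underset{(\overline{\sigma},r)}{\ast}v$.

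I expect the main obstacle to be the bookkeeping in this last case: correctly pinning down $\phi(\varepsilon_{M^i_{j,r}})$ from the equation $\phi(\lambda(F^i_{j,n_i+1}))=\varepsilon_{M^i_{j,k_0}}$ (which is where the all-ones diagonal is essential), and then disentangling the resulting block-$(i,j)$ coordinates into the two subcases so that the correction term matches the definition's vector $e_{\overline{\sigma}^{-1}(k)}$ with $k=r$. Matching the convention for how $\overline{\sigma}$ permutes coordinates and verifying the complement bookkeeping over $\mathbb{Z}_2$ is the delicate part; everything else is a direct, if lengthy, substitution.
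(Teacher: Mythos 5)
Your proposal is correct and takes essentially the same route as the paper: the paper writes $\sigma\cdot[I_n\mid\Lambda_*]=[P_\sigma\mid Q_\sigma]$ and computes $P_\sigma^{-1}Q_\sigma$ entrywise, and your change-of-basis map $\phi$ is precisely $P_\sigma^{-1}$, determined on basis vectors (including the key identity $\phi(\varepsilon_{M^i_{j,r}})=\lambda(F^i_{j,n_i+1})$, which encodes the all-ones diagonal) and then pushed through the last $m$ columns. Your resulting case analysis on $(\omega(v,w))_r$ reproduces the paper's explicit formula for $P_\sigma^{-1}Q_\sigma$ and its matching with clauses (i)--(iii) of the $(\overline{\sigma},\sigma(n_i+1))$-local complementation.
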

\begin{proof} For simplicity, we denote $\sigma^i_j$ by $\sigma$. Let $G$ be an acyclic $\omega$-vector weighted digraph whose adjacency matrix is $A_{\omega}(G)=[\mathbf{v_{\alpha\beta}}]$. Let $\lambda$ be the associated characteristic function i.e, $\Lambda_{\ast}=A_{\omega}(G)+[\mathbf{\delta_{ij}}]$ where $(\mathbf{\delta_{ij}})_k$ is $1$ if $i=j$, and $0$ otherwise. Assume that $\sigma \cdot [I_{n\times n}|\Lambda_{\ast}]=[P_\sigma|Q_\sigma]$. Then $\sigma$ sends $A_{\omega}(G)$ to $A_{\omega}(G^{\sigma})$ where $A_{\omega}(G^{\sigma})=P_{\sigma}^{-1}Q_{\sigma}-[\mathbf{\delta_{ij}}]$ since $([P_{\sigma}| Q_{\sigma}])=([I_n|P^{-1}_{\sigma} \cdot Q_{\sigma}])$. Here $P^{-1}_{\sigma}$ is the $\mathbb{Z}_2$-inverse of $P_{\sigma}$. Hence $\sigma$ sends $G$ to the acyclic $\omega$-vector weighted digraph whose adjacency matrix is $A_{\omega}(G^{\sigma})$.
	
	If $\sigma(n_i+1)=\sigma(n_{i}+1)$ then $Q_{\sigma}=\Lambda_{\ast}$ and $P_{\sigma}$ is the block diagonal matrix in which all the blocks are identity matrices of corresponding dimensions except the $N^i_j$-th block that is the permutation matrix of $\overline{\sigma}$. Therefore its inverse is the block diagonal matrix of the same form whose $N^i_j$-th block is the permutation matrix of $\overline{\sigma}^{-1}$. Therefore multiplying it with $Q_{\sigma}$ permutes the $M^i_{j,1},\cdots, M^i_{j,n_i}$-th rows of $\Lambda_{\ast}$ and hence it acts on the coresponding digraph by permuting the weights of the edges from $v_{N^i_j}$.
	
Now suppose that $\sigma(n_i+1)\neq n_i+1$. In this case, we have 
\begin{eqnarray*}
	(P_\sigma)_{pq}=\begin{cases}
		1 \quad& \text{ if } p=q=M^a_{b,c} \ \text{with} \ (a,b) \neq (i,j), \\
		1 \quad& \text{ if }  p=M^{i}_{j,\sigma(k)} \ \text{and}  \ q=M^{i}_{j,k}, \sigma(k)\neq n_{i}+1 \\
		(\mathbf{v_{N^{a}_b,N^{i}_j}})_c & \text{ if } p=M^{a}_{b,c}, q=M^{i}_{j,k}, \text{ and } \sigma(k)=n_i+1  \\
		0 & \text{ otherwise }
	\end{cases}
\end{eqnarray*}
and 
\begin{eqnarray*}
	(Q_\sigma)_{rs}=\begin{cases}
		1 \quad& \text{ if } s=N^{i}_j, r=M^{i}_{j,\sigma(n_i+1)}\\
              0 \quad& \text{ if } s=N^{i}_j, r\neq M^{i}_{j,\sigma(n_i+1)}\\
		(\mathbf{v_{N^a_b,s}})_c & \text{if} \  r=M_{b,c}^a,\ (a,b)\neq(i,j) .	\end{cases}
\end{eqnarray*}
for $1\leq p,q,r\leq n$ and $1\leq s \leq m$. Since $(\mathbf{v_{pp}})_k=1$ for all $p$, the $\mathbb{Z}_2$-inverse of $P_{\sigma}$ is given by
\begin{eqnarray*}
	(P_\sigma^{-1})_{pq}=\begin{cases}
		1 \quad& \text{ if } p=q=M^a_{b,c} \ \text{with} \ (a,b) \neq (i,j), \\
		1 \quad& \text{ if }  p=M^{i}_{j,\sigma^{-1}(k)} \ \text{and}  \ q=M^{i}_{j,k}, k\neq \sigma(n_i+1)\\
		(\mathbf{v_{N^{a}_b,N^{i}_j}})_c & \text{ if } p=M^{a}_{b,c}, q=M^{i}_{j,k}, \text{ and } \sigma(n_i+1)=k  \\
		0 & \text{ otherwise.}
	\end{cases}
\end{eqnarray*} Since   $A_{\omega} \in M_{\omega}(m)$, multiplying with $Q_{\alpha}$ gives us that $P_{\sigma}^{-1}Q_{\sigma}=[\mathbf{v_{\alpha,\beta}'}]$ where
\begin{eqnarray}\label{eq:imageofdigraph}
	(\mathbf{v_{\alpha\beta}'})_{k}=\begin{cases}
		1 \quad& \text{ if } \alpha=\beta, \\
		(\mathbf{v_{N^{i}_j,\beta}})_{\sigma(n_{i}+1)} \quad& \text{ if }  \alpha=N^{i}_{j},  \ k=\sigma^{-1}(n_i+1) \\
		(\mathbf{v_{N^{i}_j,\beta}})_{\sigma(n_{i}+1)}+(\mathbf{v_{N^{i}_j,\beta}})_{\sigma(k)} & \text{ if } \alpha=N^{i}_{j}, \ k\neq \sigma^{-1}(n_i+1)    \\
(\mathbf{v_{\alpha, N^{i}_j}})_{k} \quad& \text{ if }  \alpha\neq N^{i}_{j},  \  \beta=N^{i}_{j}\\
		(\mathbf{v_{\alpha,\beta}})_{k}+(\mathbf{v_{\alpha,N^{i}_j}})_{k}(\mathbf{v_{N^{i}_j,\beta}})_{\sigma(n_{i}+1)} & \text{ otherwise.}
	\end{cases}
\end{eqnarray} Hence $A(G)_{\omega}^{\sigma}$, which is equal to $P_{\sigma}^{-1}Q_{\sigma}-[\mathbf{\delta_{ij}}]$, is the adjacency matrix of the 
the $(\overline{\sigma}^i_j, \sigma^i_j(n_i+1))$-local complementation of $G$ at the vertex $v_{N^i_j}$.

\end{proof}

\begin{remark} Note that if we assume  $(\mathbf{v_{\alpha\beta}})_{n_i+1}$ to be zero, then the formula (\ref{eq:imageofdigraph}) also gives the adjacency matrix of the image of $G$ under the action of $\sigma$ when $\sigma(n_i+1)=n_i+1$. Using this notation, we can also express the adjacency matrix of image of $G$ under the action of product or arbitary $\sigma^i_j$. More precisely, let $\sigma=\underset{i,j}{\prod} \sigma_i^j$. Using the Proposition \ref{prop:productofvs}, one can show that $k$-th coordinate of the $(\alpha,\beta)$-th entry of the adjacency matrix of 
$A_{\omega}(G \cdot \sigma)$ is given by
\begin{eqnarray*}
	\begin{cases}
		0 & \text{ if } \alpha=\beta \\
	\displaystyle 	[V_{\alpha\beta}]^{\sigma}+\sum_{(a_1,\cdots,a_p)\in S_{\alpha,\beta}}[\mathbf{v_{\alpha,a_1}}]^{\sigma}\cdot[\mathbf{v_{a_1,a_2}}]^{\sigma}\cdots[\mathbf{v_{a_{p-1},a_p}}]^{\sigma} \cdot [\mathbf{v_{a_p,\beta}}]^{\sigma} & \text{ if } \alpha=N^{a}_b \text{ and }\sigma^{a}_b(k)=n'_a+1 \\
	\displaystyle 	[\mathbf{v_{\alpha\beta}}]^{\sigma}+[\mathbf{v_{\alpha\beta}}]^{\sigma}_k+\sum_{(a_1,\cdots,a_p)\in S_{\alpha,\beta}}([\mathbf{v_{\alpha,a_1}}]^{\sigma}+[\mathbf{v_{\alpha,a_1}}]^{\sigma}_k)\cdot[\mathbf{v_{a_1,a_2}}]^{\sigma}\cdots[\mathbf{v_{a_{p-1},a_p}}]^{\sigma} \cdot [\mathbf{v_{a_p,\beta}}]^{\sigma} & \text{otherwise.} 
	\end{cases}
\end{eqnarray*} 
where $S_{\alpha,\beta}=\{(a_1,\dots,a_p)| a_k\neq a_l, \ a_k\in \{1,\dots,N^l_m\}\setminus\{\alpha,\beta\}\}$ $(V_{i,j})_{\sigma^{a}_b(k)}$ by $[V_{i,j}]^{\sigma}_k,$ where $i=N^{a}_b$ and $(V_{i,j})_{\sigma^{a}_b(n_a+1)}$ by $[V_{i,j}]^{\sigma},$ where $i=N^{a}_b.$ 
\end{remark}

\begin{definition} We say two $\omega$-vector weighted digraphs are $\omega$-equivalent if one is obtained (up to graph isomorphisms) from the other one by applying a sequence of the following operations:

\begin{enumerate}
\item Reordering vertices whose images under the dimension function are the same,
\item Permutation of the weights of edges from vertex $v$ by an element of $S_{\omega(v)}$,
\item $(\sigma,k)$-local complementation.
\end{enumerate}
\end{definition}
The following theorem directly follows from the above Lemma.

\begin{theorem}\label{thm:main} There is a bijection between the weakly $\Z$-equivariant homeomorphism classes of small covers of $P$ and the set of $\omega$-equivalence classes of $\omega$-vector weighted digraphs with $m$-labeled vertices.
\end{theorem}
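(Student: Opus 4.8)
The plan is to assemble the statement from the two classification theorems and the preceding structural Lemma, treating it as the natural endpoint of the machinery built above rather than as an independent computation. By Theorem~\ref{thm:weakly-class}, the weakly $\Z$-equivariant homeomorphism classes of small covers over $P$ are in bijection with the double coset $GL(n,\mathbb{Z}_2)\backslash\Lambda(P)/\Aut (P))$. The first step is to factor this double coset through the single coset: by Theorem~\ref{thm:DJ-class} together with the Corollary above, the left coset space $GL(n,\mathbb{Z}_2)\backslash\Lambda(P)$ is identified with the set of acyclic $\omega$-vector weighted digraphs on $m$ labeled vertices, the identification sending the class of $\lambda$ to the digraph with adjacency matrix $\Lambda_{\ast}+[\mathbf{\delta_{ij}}]$, where $\Lambda_{\ast}$ is the reduced submatrix. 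Since the right $\Aut (P))$-action commutes with the left $GL(n,\mathbb{Z}_2)$-action, it descends to this quotient, and the double coset is exactly the orbit space of the residual $\Aut (P))$-action on digraphs. Thus the theorem reduces to showing that two digraphs lie in the same $\Aut (P))$-orbit if and only if they are $\omega$-equivalent.

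For this I would use that $\Aut (P))=\prod_{i=1}^{l}\big(S_{n_i+1}\wr S_{m_i}\big)$ is generated by the elements $\mu^i\in S_{m_i}$ and $\sigma^i_j\in S_{n_i+1}$ introduced above, so it suffices to match each generator with one of the three moves defining $\omega$-equivalence. The action of $\mu^i$ was already computed to be the reordering of the vertices $\{v_p\mid N^i_1\le p\le N^i_{m_i}\}$, all of which share the dimension-function value $n_i$; this is precisely operation~(1). For $\sigma^i_j$ the preceding Lemma supplies the dichotomy: when $\sigma^i_j$ fixes $n_i+1$ it permutes the weights of the edges emanating from $v_{N^i_j}$, which is operation~(2); and when $\sigma^i_j(n_i+1)\neq n_i+1$ it acts as the $(\overline{\sigma}^i_j,\sigma^i_j(n_i+1))$-local complementation at $v_{N^i_j}$, which is operation~(3). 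Because every element of the automorphism group is a product of such generators, the group acts through compositions of moves~(1)--(3); conversely every move is realized by a single generator, and the inverse of each generator is again an element of the same family $\{\mu^i\}$ or $\{\sigma^i_j\}$, so the set of moves is closed under inverses. This makes $\omega$-equivalence a genuine (symmetric, transitive) equivalence relation coinciding with orbit equivalence, and the ``up to graph isomorphism'' clause is absorbed by the $\mu^i$-reordering, which records the freedom in ordering equidimensional simplices when passing to the reduced submatrix.

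The genuinely delicate point is not the bookkeeping of generators but the well-definedness of the descended action, and this is exactly what the Lemma secures. Right multiplication by $g\in\Aut (P))$ sends a representative $(I_n\mid\Lambda_{\ast})$ to some $(P_g\mid Q_g)$ that need no longer have an identity block in its first $n$ columns; to read off the resulting digraph one must re-reduce by $P_g^{-1}$, obtaining $(I_n\mid P_g^{-1}Q_g)$, and only then subtract $[\mathbf{\delta_{ij}}]$ to recover an adjacency matrix. The content of the Lemma is precisely the verification that $P_g^{-1}Q_g-[\mathbf{\delta_{ij}}]$ is again the adjacency matrix of an acyclic $\omega$-vector weighted digraph and that it agrees with the prescribed move; the nontrivial part there is the computation of the $\mathbb{Z}_2$-inverse $P_g^{-1}$ and the use of the fact that all principal minors equal $1$, so that the relevant products telescope as in Proposition~\ref{prop:productofvs}. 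Granting the Lemma, the theorem follows by the orbit-space identification described above, with no further computation required; I therefore expect the only real obstacle to have already been resolved inside the Lemma, leaving the proof of the theorem itself as a short assembly.
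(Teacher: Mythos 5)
Your proposal is correct and follows essentially the same route as the paper: the paper's proof consists of the single remark that the theorem ``directly follows from the above Lemma,'' and what you have written is exactly the intended assembly --- Theorem~\ref{thm:weakly-class} gives the double coset, the Corollary identifies the left coset space with acyclic $\omega$-vector weighted digraphs, and the Lemma together with the computed action of $\mu^i$ matches the generators of $\Aut(P))$ with the three $\omega$-equivalence moves. Your additional remarks on closure under inverses and on the well-definedness of the descended action are sound elaborations of what the paper leaves implicit.
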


It is easy to find the number of $\omega$-equivalence classes of acyclic $\omega$-vector weighted digraphs with $2$-labeled vertices since the operation (3) can be reduced the one that replaces the zeros and ones except the one in the fixed coordinate in the weights of vertices from the fixed vertex. In the following example, we count this number and in the last section we consider the case where digraphs have $3$-vertices. 

\begin{example}\label{ex} Let $P=\Delta^{n_1}\times \Delta^{n_2}$. Then an acyclic $\omega$-vector weighted digraph, where $\omega:\{1,2\} \rightarrow \mathbb{N}$ is the dimension function of $P$ is of the one of the following types

\begin{figure}[h!]
\centering
\begin{tikzpicture}[thick,scale=1,->,shorten >=2pt]
\draw[gray, fill] (0,3) circle [radius=0.1];
\draw (0,3) circle [radius=0.1];
\node () at (0,2.7) {$v_1$};
\draw[gray, fill] (2,3) circle [radius=0.1];
\draw (2,3) circle [radius=0.1];
\node () at (2,2.7) {$v_2$};
\node at (1, 2.2) {Type 1};

\draw[gray, fill] (4,3) circle [radius=0.1];
\draw (4,3) circle [radius=0.1];
\node (a) at  (4,3) {};
\node () at (4,2.7) {$v_1$};
\draw[gray, fill] (6,3) circle [radius=0.1];
\draw (6,3) circle [radius=0.1];
\node (b) at  (6,3) {};
\node () at (6,2.7) {$v_2$};
\path[->] (a) edge node [above] {$\Bigg(\begin{smallmatrix}
		v_1 \\
		\vdots \\
                v_{n_1}
		\end{smallmatrix}\Bigg)$} (b); 

\node at (5, 2.2) {Type 2};

\draw[gray, fill] (8,3) circle [radius=0.1];
\draw (8,3) circle [radius=0.1];
\node (c) at  (8,3) {};
\node () at (8,2.7) {$v_1$};
\draw[gray, fill] (10,3) circle [radius=0.1];
\draw (10,3) circle [radius=0.1];
\node () at (10,2.7) {$v_2$};
\node (d) at  (10,3) {};
\path[->] (d) edge node [above] {$\Bigg(\begin{smallmatrix}
		w_1 \\
		\vdots \\
                w_{n_2}
		\end{smallmatrix}\Bigg)$} (c); 
\node at (9, 2.2) {Type 3};
\end{tikzpicture}
\end{figure}
\noindent where $(v_1,\cdots,v_{n_1}) \in \mathbb{Z}_2^{n_1} \setminus \{\mathbf{0}\}$ and $(w_1,\cdots,w_{n_2}) \in \mathbb{Z}_2^{n_2} \setminus \{\mathbf{0}\}$. Note that if two acyclic $\omega$-vector weighted digraphs on two vertices are isomorphic then they have the same type. Let $n_1\neq n_2$. Then two $\omega$-weighted digraphs $G_1$ and $G_2$ of Type 2 are $\omega$-equivalent if and only if $u_1=u_2$ or $u_1+u_2=n_1-1$ where $u_i$ is the number of zero coordinates of the weight vector of the edge $(v_1,v_2)$ in $G_i$, for $i=1,2$. Therefore the number of equivalence classes of Type 2 is $\lfloor \frac{ n_1+1}{2}\rfloor$. This is also true for the acyclic $\omega$-vector weighted digraphs of Type 3. So there are $1+\lfloor \frac{ n_1+1}{2}\rfloor+\lfloor \frac{ n_2+1}{2} \rfloor$ $\omega$-equivalence classes of acyclic $\omega$-vector weighted digraphs with labeled vertices $v_1,v_2$. Hence there are $1+\lfloor \frac{ n_1+1}{2}\rfloor+\lfloor \frac{ n_2+1}{2} \rfloor$ weakly $\Z$-equivariant homeomorphism classes of small covers over $P=\Delta^{n_1}\times \Delta^{n_2}$ when $n_1\neq n_2$.  

When $n_1=n_2=n$, the reordering of the vertices $v_1$ and $v_2$ is also allowed. Therefore the number of weakly $\Z$-equivariant homeomorphism classes of small covers over $P$ is $1+\lfloor \frac{ n+1}{2}\rfloor$ in this case.
\end{example}

\section{Some results on number of permutations of certain types}
\label{sect:permutations}
In this section, we give some results on the number of permutations of certain types that we need in the next section to find a formula for the number of acyclic  $\omega$-vector weighted digraphs on labeled $3$ vertices up to $\omega$-equivalence. It is well-known that the number of permutations of $n$ elements with $m$ cycle is given by the unsigned Stirling number of the first kind denoted by $c(n,m)$. The Stirling number of the first kind is originally defined as  the coefficient of the expansion of the rising factorial $x^{\bar{n}}$ into powers of $x$, that is,
\begin{eqnarray}\label{rising_Stirling}x^{\bar{n}}=\sum_{m=0}^{n}c(n,m)x^m.\end{eqnarray} They also satisfy the following recurrence relations
\begin{eqnarray*}
	c(n,m)&=& \sum_{k=1}^{n}\dfrac{(n-1)!}{(n-k)!}c(n-k,m-1),\\
	c(n,m)&=&c(n-1,m-1)+(n-1)c(n-1,m).\\
\end{eqnarray*}

Let us denote by $c_d(n,m)$ the permutations of $n$ elements with $m$ cycles all of which has length divisible by $d$. Clearly when $n$ is not divisible by $d$, $c_d(n,m)$ is zero. To find $c_d(n,m)$ when $n$ is divisible by $d$, consider the cycle containing $n$. If this cycle has length $dk$, then there are ${n−1 \choose dk−1}(dk-1)!=\dfrac{(n-1)!}{(n-dk)!}$ ways to choose this cycle, and $c_d(n−dk,m-1)$ ways to choose permutations consists of the $n−dk$ elements outside that cycle. Therefore, we have the following relation
$$	c_d(dn,m)= \sum_{k=1}^{t}\dfrac{(dn-1)!}{(dn-dk)!}c_d(dn-dk,m-1).$$
Another way to calculate the number $c_d(n,m)$ is to use the following recurrence relation
$$c_d(dn+d,m)=(dn+1)^{\overline{d-1}}c_d(dn,m-1)+(dn)^{\overline{d}}c_d(dn,m).$$ Indeed we can divide the permutations of $dn+d$ elements whose cycles are all divisible by $d$ into two types: the one in which the cycle containing $dn+d$ has length $d$ and the others. Note that there are $(dn+1)^{\overline{d-1}}c_d(dn,m-1)$ permutations of the first type since there are $(dn+d-1)(dn+d-2)\cdots (dn+1)=(dn+1)^{\overline{d-1}}$ ways to choose the other elements of the cycle containing $dn+d$. On the other hand, when the cycle containing $dn+d$ has length greater than $d$ by deleting the element $dn+d$ and the first $d-1$ elements coming next to it, we obtain a permutation of $dn$ elements that consists of exactly $m$ cycles of length divisible by $d$ and vice a versa. Here we can choose the elements that go next to $dn+d$ in the cycle containing $dn+d$ in $(dn+d-1)(dn+d-2)\cdots (dn+1)$ ways. This leaves $dn$ remaining elements. For any permutation of the remaining elements of the same type, we can choose one of these elements, say $x$ and place the $d$ elements in order to the left of $x$. Therefore the number of permutations of the second type is $(dn+d-1)(dn+d-2)\cdots (dn)c_d(dn,m)$ as desired.  
\begin{lemma}\label{lem:eqfordrising}For every $n\in \mathbb{N}$, we have the following relation \begin{eqnarray}\label{rising_even} (x)^{\bar{n}}=\dfrac{n!}{(dn)!}\overset{n}{\underset{m=0}{\sum}} c_d(dn,m)(xd)^m.\end{eqnarray}
\end{lemma}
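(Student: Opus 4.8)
The plan is to argue by induction on $n$, using the second recurrence relation for $c_d$ established just before the statement, namely $c_d(dn+d,m)=(dn+1)^{\overline{d-1}}c_d(dn,m-1)+(dn)^{\overline{d}}c_d(dn,m)$, together with the defining property $x^{\overline{n+1}}=(x+n)x^{\bar n}$ of the rising factorial. For the base case $n=0$ both sides equal $1$, since the empty product gives $(x)^{\bar 0}=1$ while the right-hand side collapses to $c_d(0,0)=1$ (the empty permutation has no cycles, all vacuously of length divisible by $d$).

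For the inductive step I would set $S_n=\sum_{m=0}^{n}c_d(dn,m)(xd)^m$, so that the inductive hypothesis reads $S_n=\frac{(dn)!}{n!}(x)^{\bar n}$. Substituting the recurrence into $\sum_{m}c_d(dn+d,m)(xd)^m$ and reindexing the term arising from $c_d(dn,m-1)$ to factor out one copy of $xd$ gives $\sum_{m=0}^{n+1}c_d(dn+d,m)(xd)^m=\big[(dn+1)^{\overline{d-1}}(xd)+(dn)^{\overline{d}}\big]S_n$.

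The one algebraic observation that makes everything collapse is $(dn)^{\overline{d}}=dn\,(dn+1)^{\overline{d-1}}$, so that the bracket factors as $(dn+1)^{\overline{d-1}}\,d(x+n)$. Assembling $P_{n+1}(x):=\frac{(n+1)!}{(dn+d)!}\sum_{m}c_d(dn+d,m)(xd)^m$, substituting $S_n$ from the hypothesis, and cancelling via $(dn+d)!=(dn+1)^{\overline{d-1}}(dn+d)(dn)!$ together with $dn+d=d(n+1)$, all the factorials and rising factorials cancel and leave exactly $(x+n)(x)^{\bar n}=(x)^{\overline{n+1}}$, which closes the induction.

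The combinatorial content is entirely carried by the recurrence, which is already proved, so the main obstacle is purely the bookkeeping: keeping the shifted factorials $(dn+d)!$, $(dn)!$, $(n+1)!$, $n!$ and the rising factorials $(dn+1)^{\overline{d-1}}$, $(dn)^{\overline{d}}$ aligned so that the cancellation is exact. I would double-check the edge behaviour of the summation limits (the $m=0$ and $m=n+1$ terms) to ensure the reindexing neither drops nor double-counts a term.
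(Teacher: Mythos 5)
Your proof is correct and follows essentially the same route as the paper: induction on $n$ using the recurrence $c_d(dn+d,m)=(dn+1)^{\overline{d-1}}c_d(dn,m-1)+(dn)^{\overline{d}}c_d(dn,m)$, reindexing the shifted term to extract a factor of $xd$, and exploiting $(dn)^{\overline{d}}=dn\,(dn+1)^{\overline{d-1}}$ together with $(dn+d)!=(dn)!\,(dn+1)^{\overline{d-1}}\,d(n+1)$ to reduce the bracket to $(x+n)$. Your bookkeeping of the prefactor $\frac{(n+1)!}{(dn+d)!}$ is in fact slightly more careful than the paper's, which writes $\frac{n!}{(dn+d)!}$ on the left-hand side of its display (an off-by-$(n+1)$ typo that the subsequent coefficients silently correct).
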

\begin{proof} We prove by induction on $n$. The case $n=1$ is trivial. Since $c_d(dn,-1)=c_d(dn,n+1)=0$, we have 
\begin{eqnarray*}\dfrac{n!}{(dn+d)!}\overset{n+1}{\underset{m=0}{\sum}} c_d(dn+d,m)(xd)^m&=&\dfrac{n!}{d(dn)!}\overset{n+1}{\underset{m=0}{\sum}} c_d(dn,m-1)(xd)^m+\dfrac{n(n!)}{(dn)!}\overset{n+1}{\underset{m=0}{\sum}} c_d(dn,m)(xd)^m\\
	&=&\dfrac{x\cdot n!}{(dn)!}\overset{n}{\underset{m=0}{\sum}} c_d(dn,m)(xd)^m+\dfrac{n(n!)}{(dn)!}\overset{n}{\underset{m=0}{\sum}} c_d(dn,m)(xd)^m\\
	&=&(x+n)\dfrac{ n!}{(dn)!}\overset{n}{\underset{m=0}{\sum}} c_d(dn,m)(xd)^m
	\end{eqnarray*}
by the above formula. Since $(x+n)  x^{\overline{n}}=x^{\overline{n+1}}$, the result follows by induction.
\end{proof}

Let $c(n,m,e)$ denotes the number of permutation of $n$-elements with $m$-cycles exactly $e$ of them have even lengths.
Considering the cases where the cycle containing $n$ is even or odd, one can easily obtains the following formula
$$c(n,m,e)=\sum_{k=1}^{\lfloor \frac{n-1}{2}\rfloor}\dfrac{(n-1)!}{(n-2t-1)!}c(n-2t-1,m-1,e)+\sum_{k=1}^{\lfloor \frac{n}{2}\rfloor}\dfrac{(n-1)!}{(n-2t)!}c(n-2t,m,e-1).$$ Another recurrence relation including these numbers is
\begin{eqnarray}\label{giveneven_ones} c(n,m,e)=c(n-1,m-1,e)+(n-1)c(n-2,m-1,e-1)+(n-1)(n-2)c(n-2,m,e).\end{eqnarray} The above relation can be proved as above by considering the cases where the length of the cycle containing $n$ is $1$, $2$ or  $\geq 2$. Since $c(n,m,0)=c(n-1,m-1,0)+(n-1)(n-2)c(n-2,m,0)$, the following result easily follows from induction.
\begin{lemma}\label{eq:forallodd} For every $n\in \mathbb{N}$, $\underset{m=1}{\overset{n}{\sum}}2^mc(n,m,0)=2(n!).$
\end{lemma}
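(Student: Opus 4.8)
The plan is to argue by induction on $n$, exactly as the text suggests, using the specialization of the recurrence (\ref{giveneven_ones}) at $e=0$. Setting $e=0$ there and noting that $c(n-2,m-1,-1)=0$ (a permutation cannot have a negative number of even cycles), the middle term disappears and one is left with
\[
c(n,m,0)=c(n-1,m-1,0)+(n-1)(n-2)\,c(n-2,m,0).
\]
Writing $S(n)=\sum_{m=1}^{n}2^m c(n,m,0)$, the goal is to prove $S(n)=2(n!)$.

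For the inductive step (with $n\geq 3$) I would multiply this recurrence by $2^m$ and sum over $m$. After the reindexing $m'=m-1$ and using $c(n-1,0,0)=0$, the first term contributes $2\sum_{m'=1}^{n-1}2^{m'}c(n-1,m',0)=2\,S(n-1)$; since $c(n-2,m,0)=0$ for $m>n-2$, the second term contributes $(n-1)(n-2)\sum_{m=1}^{n-2}2^m c(n-2,m,0)=(n-1)(n-2)\,S(n-2)$. This yields the two-step recursion $S(n)=2\,S(n-1)+(n-1)(n-2)\,S(n-2)$. Feeding in the induction hypotheses $S(n-1)=2(n-1)!$ and $S(n-2)=2(n-2)!$ and simplifying with the identity $(n-1)(n-2)(n-2)!=(n-2)(n-1)!$ gives
\[
S(n)=4(n-1)!+2(n-2)(n-1)!=\big(4+2(n-2)\big)(n-1)!=2n\,(n-1)!=2(n!),
\]
which closes the induction.

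The base cases are immediate and must be checked for both $n=1$ and $n=2$, since the recursion steps down by two: for $n=1$ the identity is a single odd $1$-cycle, so $S(1)=2^1 c(1,1,0)=2=2(1!)$, and for $n=2$ the only all-odd-cycle permutation is the identity with two $1$-cycles, so $S(2)=2^2 c(2,2,0)=4=2(2!)$. I expect the only delicate point to be the boundary bookkeeping in the summation, namely verifying that the $m=0$ term genuinely vanishes, that the upper limit of the second sum may be freely relaxed, and above all that the middle term of (\ref{giveneven_ones}) really drops out at $e=0$; there is no analytic difficulty.

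As an independent sanity check, one may verify the result via generating functions: by the exponential formula, the weighted count $\sum_{n\geq0}S(n)\,x^n/n!$ equals $\exp\!\big(2\sum_{k\ \mathrm{odd}}x^k/k\big)$, and since $\sum_{k\ \mathrm{odd}}x^k/k=\tfrac12\log\frac{1+x}{1-x}$ this is $\frac{1+x}{1-x}=1+2\sum_{n\geq1}x^n$, whose coefficient of $x^n$ is $2$ for every $n\geq1$; reading off $S(n)/n!=2$ recovers $S(n)=2(n!)$.
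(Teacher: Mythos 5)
Your proof is correct and follows essentially the same route as the paper, which states the specialized recurrence $c(n,m,0)=c(n-1,m-1,0)+(n-1)(n-2)c(n-2,m,0)$ and invokes induction; you have simply carried out the summation against $2^m$ and the two base cases explicitly, which the paper leaves to the reader. The generating-function verification is a nice independent confirmation but is not part of the paper's argument.
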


We also need the following results to count the number of weakly $\mathbb{Z}_2^n$-equivariant homeomorphism classes of small covers over a product of three simplices.

\begin{lemma}For every $n\in \mathbb{N}$, $$\sum_{m=1}^n2^m\sum_{e=1}^mc(n,m,e)=(n-1)(n!).$$
\end{lemma}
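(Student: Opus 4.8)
The plan is to reduce the double sum to two single sums that have already been evaluated earlier in this section. The crucial observation is that for each fixed $m$, summing $c(n,m,e)$ over all admissible $e$ recovers the total number of permutations of $n$ elements with exactly $m$ cycles, namely the unsigned Stirling number $c(n,m)$; that is, $\sum_{e=0}^{m} c(n,m,e) = c(n,m)$. Peeling off the $e=0$ term then gives $\sum_{e=1}^{m} c(n,m,e) = c(n,m) - c(n,m,0)$, which is precisely the inner sum appearing in the statement.

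First I would substitute this identity into the left-hand side and split it as
$$\sum_{m=1}^{n} 2^m \sum_{e=1}^{m} c(n,m,e) = \sum_{m=1}^{n} 2^m c(n,m) - \sum_{m=1}^{n} 2^m c(n,m,0).$$
The second sum is exactly the content of Lemma \ref{eq:forallodd}, so it equals $2(n!)$. For the first sum I would evaluate the generating function (\ref{rising_Stirling}) at $x=2$: since $\sum_{m=0}^{n} c(n,m)\, x^m = x^{\bar{n}}$ and $c(n,0)=0$ for $n\ge 1$, the sum restricted to $m\ge 1$ equals $2^{\bar{n}} = 2\cdot 3 \cdots (n+1) = (n+1)!$.

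Combining the two evaluations yields
$$\sum_{m=1}^{n} 2^m \sum_{e=1}^{m} c(n,m,e) = (n+1)! - 2(n!) = \big((n+1)-2\big)\,n! = (n-1)(n!),$$
as claimed. I expect no serious obstacle here; the argument is essentially bookkeeping built on the two facts already established. The only points that need care are the handling of the $m=0$ term, where one must note that $c(n,0)=0$ for $n\ge 1$ so that restricting the Stirling generating-function identity to $m\ge 1$ changes nothing, and the telescoping of the rising factorial $2^{\bar{n}}$ to $(n+1)!$. A quick sanity check at $n=1$ (both sides $0$) and $n=2$ (both sides $2$) confirms the computation.
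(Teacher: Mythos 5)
Your proof is correct and follows essentially the same route as the paper: write the inner sum as $c(n,m)-c(n,m,0)$, evaluate $\sum_m 2^m c(n,m)=2^{\bar{n}}=(n+1)!$ via the Stirling generating function (\ref{rising_Stirling}), and subtract the $2(n!)$ from Lemma \ref{eq:forallodd}. The paper states this only as a one-line remark; your write-up just fills in the same bookkeeping explicitly.
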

\begin{proof}The result follows from the relation $\overset{m}{\underset{e=1}{\sum}}c(n,m,e)=c(n,m)-c(n,m,0)$ and the corresponding relations for $c(n,m)$ and $c(n,m,0)$.
\end{proof}

\begin{proposition}\label{prop:mandev} For every $n\in \mathbb{N}$, we have the following formula
	\begin{eqnarray*}\sum_{m=1}^n2^m\sum_{e=1}^m2^ec(n,m,e)=
		\begin{cases}
			(2k)!(k^2+2k-1) \quad& \text{ when $n=2k$,} \\
			(2k+1)!k(k+3) \quad& \text{when $n=2k+1$ }.	\end{cases}
	\end{eqnarray*}
	
\end{proposition}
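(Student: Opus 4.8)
The plan is to encode the numbers $c(n,m,e)$ in an exponential generating function and read off the sum as a single coefficient. Marking each cycle by a variable $a$ and, in addition, each even cycle by $b$, the exponential formula gives
$$F(x,a,b):=\sum_{n\ge 0}\frac{x^n}{n!}\sum_{m,e} c(n,m,e)\,a^m b^e=\exp\!\Big(a\sum_{\ell\text{ odd}}\frac{x^\ell}{\ell}+ab\sum_{\ell\text{ even}}\frac{x^\ell}{\ell}\Big),$$
since a single cycle of length $\ell$ on labelled points has generating function $x^\ell/\ell$ and is weighted by $a$ when $\ell$ is odd and by $ab$ when $\ell$ is even. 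Using $\sum_{\ell\text{ odd}}x^\ell/\ell=\tfrac12\log\tfrac{1+x}{1-x}$ and $\sum_{\ell\text{ even}}x^\ell/\ell=-\tfrac12\log(1-x^2)$, this simplifies to $F(x,a,b)=\big(\tfrac{1+x}{1-x}\big)^{a/2}(1-x^2)^{-ab/2}$.

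Write $S(n)$ for the left-hand side of the proposition; it is obtained by specialising $a=b=2$. I would first note that the inner sum starts at $e=1$, so it omits the permutations with no even cycle; by Lemma \ref{eq:forallodd} these contribute $\sum_m 2^m c(n,m,0)=2\,(n!)$. Hence
$$S(n)=n!\,[x^n]F(x,2,2)-2\,(n!).$$
At $a=b=2$ the generating function collapses to $F(x,2,2)=\frac{1+x}{1-x}(1-x^2)^{-2}=\frac{1}{(1-x)^3(1+x)}$, a rational function which I decompose by partial fractions as $\tfrac{1/8}{1-x}+\tfrac{1/4}{(1-x)^2}+\tfrac{1/2}{(1-x)^3}+\tfrac{1/8}{1+x}$.

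Extracting the coefficient of $x^n$ then yields
$$[x^n]F(x,2,2)=\frac18+\frac{n+1}{4}+\frac{(n+1)(n+2)}{4}+\frac{(-1)^n}{8}=\frac{1+(-1)^n}{8}+\frac{(n+1)(n+3)}{4},$$
so that $S(n)=n!\big(\tfrac{1+(-1)^n}{8}+\tfrac{(n+1)(n+3)}{4}-2\big)$. Splitting on the parity of $n$ and simplifying the bracket gives $k^2+2k-1$ when $n=2k$ and $k(k+3)$ when $n=2k+1$, which is exactly the claimed formula.

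I expect the main obstacle to be bookkeeping rather than conceptual: justifying the cycle-indexed exponential formula for $F$, and in particular keeping the $e=1$ versus $e=0$ boundary straight so that the $2\,(n!)$ correction from Lemma \ref{eq:forallodd} is subtracted exactly once. As an alternative that stays closer to the inductive style of this section, one can combine the recurrence (\ref{giveneven_ones}) with Lemma \ref{eq:forallodd} to derive the two-term recurrence $S(n)=2S(n-1)+(n-1)(n+2)S(n-2)+8\,(n-1)!$ and then verify the closed form by induction, checking $n=1,2$ and carrying out the inductive step separately for even and odd $n$; here the delicate point is once more that re-indexing the middle term of (\ref{giveneven_ones}) produces an $e=0$ contribution, which is precisely where Lemma \ref{eq:forallodd} enters.
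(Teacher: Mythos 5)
Your argument is correct, but your primary route is genuinely different from the paper's. The paper proves the proposition by induction on $n$: it applies the recurrence (\ref{giveneven_ones}) to $\sum_m 2^m\sum_{e\ge 1}2^e c(n,m,e)$, uses Lemma \ref{eq:forallodd} to absorb the $e=0$ terms created by re-indexing the middle summand, arrives at exactly the two-term recurrence $S(n)=2S(n-1)+(n-1)(n+2)S(n-2)+8(n-1)!$ that you state as your alternative, and then verifies the closed form separately for $n=2k$ and $n=2k+1$. Your main proof instead packages the whole family $c(n,m,e)$ into the exponential generating function $F(x,a,b)=\big(\tfrac{1+x}{1-x}\big)^{a/2}(1-x^2)^{-ab/2}$, specializes to $a=b=2$ where it collapses to the rational function $\tfrac{1}{(1-x)^3(1+x)}$, and reads off the answer by partial fractions; I checked the decomposition $\tfrac{1/8}{1-x}+\tfrac{1/4}{(1-x)^2}+\tfrac{1/2}{(1-x)^3}+\tfrac{1/8}{1+x}$, the coefficient extraction, and the final simplification to $k^2+2k-1$ and $k(k+3)$, and all are correct, as is your one-time subtraction of $2(n!)$ for the $e=0$ boundary via Lemma \ref{eq:forallodd}. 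What your approach buys is a uniform closed form $S(n)=n!\big(\tfrac{1+(-1)^n}{8}+\tfrac{(n+1)(n+3)}{4}-2\big)$ with no parity case-split in the derivation, and the same $F(x,a,b)$ would also deliver Lemma \ref{eq:forallodd} and Corollary \ref{cor:mandev-1} by other specializations of $(a,b)$; what the paper's induction buys is that it stays entirely within the elementary cycle-counting recurrences already established in Section \ref{sect:permutations}, at the cost of guessing the answer in advance and checking the two parities separately.
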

\begin{proof} We prove by induction. The case $n=1$ is trivial. Suppose that the above equations holds for all integers less than $n$. Using the equation (\ref{giveneven_ones}) and the Lemma \ref{eq:forallodd}, one can obtain that
\begin{eqnarray*}\sum_{m=1}^n2^m\sum_{e=1}^m2^ec(n,m,e)&=&2\sum_{m=1}^{n-1}2^m\sum_{e=1}^m2^ec(n-1,m,e)+\Big(4(n-1)+(n-1)(n-2)\Big)\Big(\sum_{m=1}^{n-2}2^m\sum_{e=1}^m2^ec(n-2,m,e)\Big)\\
	&+&4(n-1)\sum_{m=1}^{n-2}2^mc(n-2,m,0)\\
	&=&2\sum_{m=1}^{n-1}2^m\sum_{e=1}^m2^ec(n-1,m,e)+(n-1)(n+2)\Big(\sum_{m=1}^{n-2}2^m\sum_{e=1}^m2^ec(n-2,m,e)\Big)+8(n-1)!.
	\end{eqnarray*}
Now suppose that $n=2k$. By induction, we have 
\begin{eqnarray*}\sum_{m=1}^n2^m\sum_{e=1}^m2^ec(n,m,e)&=&2(2k-1)!(k-1)(k+2)+(2k-1)(2k+2)(2k-2)!((k-1)^2+2(k-1)-1)+8(2k-1)!\\
	&=&(2k)!(k^2+2k-1), \end{eqnarray*}
as desired. The case $n=2k+1$ can be shown similarly.
\end{proof}

As an immediate consequences of above results, we have the following
\begin{corollary}\label{cor:mandev-1} For every $n\in \mathbb{N}$, we have
	$$\sum_{m=1}^n2^m\sum_{e=1}^m(2^e-1)c(n,m,e)=(n!) \Big\lceil \frac{n}{2} \Big\rceil \Big\lfloor \dfrac{n}{2} \Big\rfloor.$$
\end{corollary}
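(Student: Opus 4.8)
The plan is to read the factor $2^e-1$ as the difference $2^e-1$, which splits the target sum into two pieces that have both already been evaluated, so that no fresh induction is required. Concretely, I would write
$$\sum_{m=1}^n2^m\sum_{e=1}^m(2^e-1)c(n,m,e)=\sum_{m=1}^n2^m\sum_{e=1}^m2^ec(n,m,e)-\sum_{m=1}^n2^m\sum_{e=1}^mc(n,m,e),$$
and then observe that the first sum on the right is exactly the quantity computed in Proposition \ref{prop:mandev}, while the second is the Lemma immediately preceding that proposition, whose value is $(n-1)(n!)$. Hence the whole argument reduces to substituting these two closed forms and simplifying, keeping track of the parity of $n$.

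For $n=2k$ this gives $(2k)!(k^2+2k-1)-(2k-1)(2k)!=(2k)!\big((k^2+2k-1)-(2k-1)\big)=(2k)!\,k^2$, which matches $n!\lceil n/2\rceil\lfloor n/2\rfloor=(2k)!\,k\cdot k$. For $n=2k+1$ it gives $(2k+1)!\,k(k+3)-2k\,(2k+1)!=(2k+1)!\big(k(k+3)-2k\big)=(2k+1)!\,k(k+1)$, which matches $n!\lceil n/2\rceil\lfloor n/2\rfloor=(2k+1)!\,(k+1)\cdot k$. In each parity case the two sides coincide, which establishes the claimed identity.

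Since the statement is a direct corollary of two results proved just above, there is no real obstacle here; the work is purely algebraic. The only thing I would watch carefully is aligning the two parity cases of Proposition \ref{prop:mandev} with the correct evaluation of $\lceil n/2\rceil\lfloor n/2\rfloor$ (equal to $k^2$ when $n=2k$ and to $k(k+1)$ when $n=2k+1$) and confirming the small cancellations $(k^2+2k-1)-(2k-1)=k^2$ and $k(k+3)-2k=k(k+1)$.
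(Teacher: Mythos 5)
Your proposal is correct and is exactly the argument the paper intends: the corollary is stated as an immediate consequence of the preceding Lemma and Proposition \ref{prop:mandev}, obtained by splitting $2^e-1=2^e-1$ and subtracting the two closed forms. Your parity-case algebra ($(k^2+2k-1)-(2k-1)=k^2$ and $k(k+3)-2k=k(k+1)$) checks out in both cases.
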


\section{Products of three simplices}
\label{sect:Products of three simplices}

In this section, we give a formula for the number of weakly $\Z$-equivariant small covers over $P=\Delta^{n_1}\times \Delta^{n_2}\times\Delta^{n_3}$ where $n_1\leq n_2\leq n_3$ and $n_1+n_2+n_3=n$. By the Theorem \ref{thm:main}, the number of such classes is equivalent to the number of $\omega$-equivalence classes of acyclic $\omega$-vector weighted digraphs on $3$-labeled vertices $\{v_1,v_2,v_3\}$ where $\omega(i)=n_i, \ 1\leq i\leq 3$. Since the number of acyclic $\omega$-vector weighted digraphs depends also on the number of vertices whose images under the dimension function are the same, we need to consider the cases where $n_1<n_2<n_3$, $n_1=n_2<n_3$ and $n_1=n_2=n_3$, separately. 

We first consider the case where $n_1<n_2<n_3$. The others follow easily from this case. Note there are $25$ different acyclic digraph with labeled vertices $\{v_1,v_2,v_3\}$ and hence we can classify the acylic $\omega$-vector weighted digraphs as shown in the Figure \ref{figure:table3}. By duality, it suffices to understand the number of $\omega$-equivalence classes of Type $1$, Type $2$, Type $8$, Type $11$, Type $17$ and Type $23$. As a set, an equivalence class of the $\omega$-vector weighted digraphs of the type $1$, $2$, $8$ or $17$ consists of digraphs of the same type. However, $\omega$-vector weighted digraph of Type $11$ can be $\omega$-equivalent to that of Type $23$. There is only one $\omega$-vector weighted digraph of Type $1$. As discussed in the Example \ref{ex}, there are $\lfloor \frac{ n_1+1}{2}\rfloor$ different $\omega$-equivalence classes of Type $2$. Clearly, two $\omega$-vector weighted digraph of Type $17$ are $\omega$-equivalent if and only if the number of zero coordinates of the weight vectors $v$ and $w$ in each of the graphs are either the same or their sum is $n_2-1$ and $n_3-1$, respectively . Therefore the number of $\omega$-equivalence classes of Type $17$ is $\lfloor \frac{n_2+1}{2}\rfloor \cdot \lfloor \frac{n_3+1}{2}\rfloor$. 

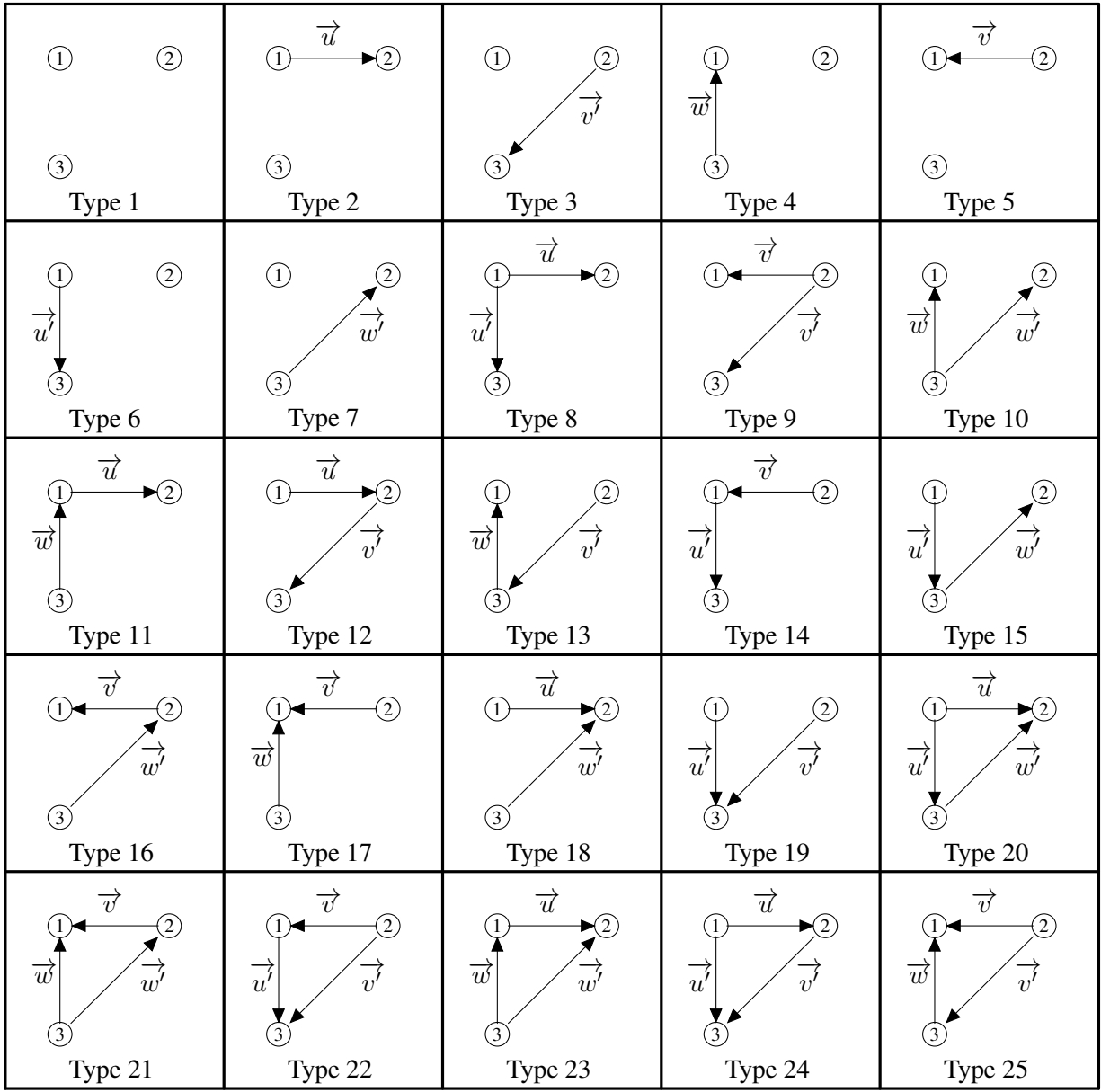
\begin{figure}[h!]
	\centering
\begin{tikzpicture}[line cap=round,line join=round,>=triangle 45,x=.8cm,y=.8cm]
\clip(-0.2,-0.2) rectangle (20.2,20.2);

\draw [->,line width=0.4pt] (1,1.2) -- (1,2.8);
\draw [->,line width=0.4pt] (1,14.8) -- (1,13.2);
\draw [->,line width=0.4pt] (1,9.2) -- (1,10.8);
\draw [->,line width=0.4pt] (1.2,11) -- (2.8,11);
\draw [->,line width=0.4pt] (1.2,5.2) -- (2.8,6.8);
\draw [->,line width=0.4pt] (1.2,1.2) -- (2.8,2.8);

\draw [->,line width=0.4pt] (2.8,3) -- (1.2,3);
\draw [->,line width=0.4pt] (2.8,7) -- (1.2,7);

\draw [->,line width=0.4pt] (5.2,19) -- (6.8,19);
\draw [->,line width=0.4pt] (5.2,11) -- (6.8,11);
\draw [->,line width=0.4pt] (5,5.2) -- (5,6.8);
\draw [->,line width=0.4pt] (5,2.8) -- (5,1.2);
\draw [->,line width=0.4pt] (5.2,13.2) -- (6.8,14.8);

\draw [->,line width=0.4pt] (6.8,3) -- (5.2,3);
\draw [->,line width=0.4pt] (6.8,2.8) -- (5.2,1.2);
\draw [->,line width=0.4pt] (6.8,10.8) -- (5.2,9.2);
\draw [->,line width=0.4pt] (6.8,7) -- (5.2,7);

\draw [->,line width=0.4pt] (9.2,15) -- (10.8,15);
\draw [->,line width=0.4pt] (9,14.8) -- (9,13.2);
\draw [->,line width=0.4pt] (9,9.2) -- (9,10.8);
\draw [->,line width=0.4pt] (9,1.2) -- (9,2.8);
\draw [->,line width=0.4pt] (9.2,3) -- (10.8,3);
\draw [->,line width=0.4pt] (9.2,1.2) -- (10.8,2.8);
\draw [->,line width=0.4pt] (9.2,7) -- (10.8,7);
\draw [->,line width=0.4pt] (9.2,5.2) -- (10.8,6.8);

\draw [->,line width=0.4pt] (10.8,18.8) -- (9.2,17.2);
\draw [->,line width=0.4pt] (10.8,10.8) -- (9.2,9.2);

\draw [->,line width=0.4pt] (13.2,3) -- (14.8,3);
\draw [->,line width=0.4pt] (13,17.2) -- (13,18.8);
\draw [->,line width=0.4pt] (13,10.8) -- (13,9.2);
\draw [->,line width=0.4pt] (13,6.8) -- (13,5.2);
\draw [->,line width=0.4pt] (13,2.8) -- (13,1.2);

\draw [->,line width=0.4pt] (14.8,14.8) -- (13.2,13.2);
\draw [->,line width=0.4pt] (14.8,15) -- (13.2,15);
\draw [->,line width=0.4pt] (14.8,11) -- (13.2,11);
\draw [->,line width=0.4pt] (14.8,6.8) -- (13.2,5.2);
\draw [->,line width=0.4pt] (14.8,2.8) -- (13.2,1.2);

\draw [->,line width=0.4pt] (17,13.2) -- (17,14.8);
\draw [->,line width=0.4pt] (17.2,13.2) -- (18.8,14.8);
\draw [->,line width=0.4pt] (17,10.8) -- (17,9.2);
\draw [->,line width=0.4pt] (17.2,9.2) -- (18.8,10.8);
\draw [->,line width=0.4pt] (17,6.8) -- (17,5.2);
\draw [->,line width=0.4pt] (17.2,5.2) -- (18.8,6.8);
\draw [->,line width=0.4pt] (17.2,7) -- (18.8,7);
\draw [->,line width=0.4pt] (17,1.2) -- (17,2.8);

\draw [->,line width=0.4pt] (18.8,19) -- (17.2,19);
\draw [->,line width=0.4pt] (18.8,2.8) -- (17.2,1.2);
\draw [->,line width=0.4pt] (18.8,3) -- (17.2,3);

\draw [line width=1.2pt] (0.,20.)-- (20.,20.);
\draw [line width=1.2pt] (20.,20.)-- (20.,0.);
\draw [line width=1.2pt] (20.,0.)-- (0.,0.);
\draw [line width=1.2pt] (0.,0.)-- (0.,20.);
\draw [line width=1.2pt] (0.,16.)-- (20.,16.);
\draw [line width=1.2pt] (0.,12.)-- (20.,12.);
\draw [line width=1.2pt] (0.,8.)-- (20.,8.);
\draw [line width=1.2pt] (0.,4.)-- (20.,4.);
\draw [line width=1.2pt] (4.,20.)-- (4.,0.);
\draw [line width=1.2pt] (8.,20.)-- (8.,0.);
\draw [line width=1.2pt] (12.,20.)-- (12.,0.);
\draw [line width=1.2pt] (16.,20.)-- (16.,0.);

\draw (0.3,2.5) node[anchor=north west] {$\overrightarrow{w}$};
\draw (2.3,2.5) node[anchor=north west] {$\overrightarrow{w'}$};
\draw (1.5,3.8) node[anchor=north west] {$\overrightarrow{v}$};
\draw (4.3,2.5) node[anchor=north west] {$\overrightarrow{u'}$};
\draw (5.5,3.8) node[anchor=north west] {$\overrightarrow{v}$};
\draw (6.3,2.5) node[anchor=north west] {$\overrightarrow{v'}$};
\draw (8.3,2.5) node[anchor=north west] {$\overrightarrow{w}$};
\draw (9.5,3.8) node[anchor=north west] {$\overrightarrow{u}$};
\draw (10.3,2.5) node[anchor=north west] {$\overrightarrow{w'}$};
\draw (12.3,2.5) node[anchor=north west] {$\overrightarrow{u'}$};
\draw (13.5,3.8) node[anchor=north west] {$\overrightarrow{u}$};
\draw (14.3,2.5) node[anchor=north west] {$\overrightarrow{v'}$};
\draw (16.3,2.5) node[anchor=north west] {$\overrightarrow{w}$};
\draw (17.5,3.8) node[anchor=north west] {$\overrightarrow{v}$};
\draw (18.3,2.5) node[anchor=north west] {$\overrightarrow{v'}$};

\draw (1.5,7.8) node[anchor=north west] {$\overrightarrow{v}$};
\draw (2.3,6.5) node[anchor=north west] {$\overrightarrow{w'}$};
\draw (4.3,6.5) node[anchor=north west] {$\overrightarrow{w}$};
\draw (5.5,7.8) node[anchor=north west] {$\overrightarrow{v}$};
\draw (9.5,7.8) node[anchor=north west] {$\overrightarrow{u}$};
\draw (10.3,6.5) node[anchor=north west] {$\overrightarrow{w'}$};
\draw (12.3,6.5) node[anchor=north west] {$\overrightarrow{u'}$};
\draw (14.3,6.5) node[anchor=north west] {$\overrightarrow{v'}$};
\draw (16.3,6.5) node[anchor=north west] {$\overrightarrow{u'}$};
\draw (17.5,7.8) node[anchor=north west] {$\overrightarrow{u}$};
\draw (18.3,6.5) node[anchor=north west] {$\overrightarrow{w'}$};

\draw (0.3,10.5) node[anchor=north west] {$\overrightarrow{w}$};
\draw (1.5,11.8) node[anchor=north west] {$\overrightarrow{u}$};
\draw (5.5,11.8) node[anchor=north west] {$\overrightarrow{u}$};
\draw (6.3,10.5) node[anchor=north west] {$\overrightarrow{v'}$};
\draw (8.3,10.5) node[anchor=north west] {$\overrightarrow{w}$};
\draw (10.3,10.5) node[anchor=north west] {$\overrightarrow{v'}$};
\draw (12.3,10.5) node[anchor=north west] {$\overrightarrow{u'}$};
\draw (13.5,11.8) node[anchor=north west] {$\overrightarrow{v}$};
\draw (16.3,10.5) node[anchor=north west] {$\overrightarrow{u'}$};
\draw (18.3,10.5) node[anchor=north west] {$\overrightarrow{w'}$};

\draw (0.3,14.5) node[anchor=north west] {$\overrightarrow{u'}$};
\draw (6.3,14.5) node[anchor=north west] {$\overrightarrow{w'}$};
\draw (8.3,14.5) node[anchor=north west] {$\overrightarrow{u'}$};
\draw (9.5,15.8) node[anchor=north west] {$\overrightarrow{u}$};
\draw (13.5,15.8) node[anchor=north west] {$\overrightarrow{v}$};
\draw (14.3,14.5) node[anchor=north west] {$\overrightarrow{v'}$};
\draw (16.3,14.5) node[anchor=north west] {$\overrightarrow{w}$};
\draw (18.3,14.5) node[anchor=north west] {$\overrightarrow{w'}$};

\draw (5.5,19.8) node[anchor=north west] {$\overrightarrow{u}$};
\draw (10.3,18.5) node[anchor=north west] {$\overrightarrow{v'}$};
\draw (12.3,18.5) node[anchor=north west] {$\overrightarrow{w}$};
\draw (17.5,19.8) node[anchor=north west] {$\overrightarrow{v}$};

\draw (1,16.7) node[anchor=north west] {Type 1};
\draw (5,16.7) node[anchor=north west] {Type 2};
\draw (9,16.7) node[anchor=north west] {Type 3};
\draw (13,16.7) node[anchor=north west] {Type 4};
\draw (17,16.7) node[anchor=north west] {Type 5};
\draw (1,12.7) node[anchor=north west] {Type 6};
\draw (5,12.7) node[anchor=north west] {Type 7};
\draw (9,12.7) node[anchor=north west] {Type 8};
\draw (13,12.7) node[anchor=north west] {Type 9};
\draw (17,12.7) node[anchor=north west] {Type 10};
\draw (1,8.7) node[anchor=north west] {Type 11};
\draw (5,8.7) node[anchor=north west] {Type 12};
\draw (9,8.7) node[anchor=north west] {Type 13};
\draw (13,8.7) node[anchor=north west] {Type 14};
\draw (17,8.7) node[anchor=north west] {Type 15};
\draw (1,4.7) node[anchor=north west] {Type 16};
\draw (5,4.7) node[anchor=north west] {Type 17};
\draw (9,4.7) node[anchor=north west] {Type 18};
\draw (13,4.7) node[anchor=north west] {Type 19};
\draw (17,4.7) node[anchor=north west] {Type 20};
\draw (1,0.7) node[anchor=north west] {Type 21};
\draw (5,0.7) node[anchor=north west] {Type 22};
\draw (9,0.7) node[anchor=north west] {Type 23};
\draw (13,0.7) node[anchor=north west] {Type 24};
\draw (17,0.7) node[anchor=north west] {Type 25};
\begin{scriptsize}
\draw [color=black] (1,1) circle (5pt);
\draw[color=black] (1,1) node {3};
\draw [color=black] (1,3) circle (5pt);
\draw[color=black] (1,3) node {1};
\draw [color=black] (3,3) circle (5pt);
\draw[color=black] (3,3) node {2};
\draw [color=black] (5,1) circle (5pt);
\draw[color=black] (5,1) node {3};
\draw [color=black] (5,3) circle (5pt);
\draw[color=black] (5,3) node {1};
\draw [color=black] (7,3) circle (5pt);
\draw[color=black] (7,3) node {2};
\draw [color=black] (9,1) circle (5pt);
\draw[color=black] (9,1) node {3};
\draw [color=black] (9,3) circle (5pt);
\draw[color=black] (9,3) node {1};
\draw [color=black] (11,3) circle (5pt);
\draw[color=black] (11,3) node {2};
\draw [color=black] (13,1) circle (5pt);
\draw[color=black] (13,1) node {3};
\draw [color=black] (13,3) circle (5pt);
\draw[color=black] (13,3) node {1};
\draw [color=black] (15,3) circle (5pt);
\draw[color=black] (15,3) node {2};
\draw [color=black] (17,1) circle (5pt);
\draw[color=black] (17,1) node {3};
\draw [color=black] (17,3) circle (5pt);
\draw[color=black] (17,3) node {1};
\draw [color=black] (19,3) circle (5pt);
\draw[color=black] (19,3) node {2};
\draw [color=black] (1,5) circle (5pt);
\draw[color=black] (1,5) node {3};
\draw [color=black] (1,7) circle (5pt);
\draw[color=black] (1,7) node {1};
\draw [color=black] (3,7) circle (5pt);
\draw[color=black] (3,7) node {2};
\draw [color=black] (5,5) circle (5pt);
\draw[color=black] (5,5) node {3};
\draw [color=black] (5,7) circle (5pt);
\draw[color=black] (5,7) node {1};
\draw [color=black] (7,7) circle (5pt);
\draw[color=black] (7,7) node {2};
\draw [color=black] (9,5) circle (5pt);
\draw[color=black] (9,5) node {3};
\draw [color=black] (9,7) circle (5pt);
\draw[color=black] (9,7) node {1};
\draw [color=black] (11,7) circle (5pt);
\draw[color=black] (11,7) node {2};
\draw [color=black] (13,5) circle (5pt);
\draw[color=black] (13,5) node {3};
\draw [color=black] (13,7) circle (5pt);
\draw[color=black] (13,7) node {1};
\draw [color=black] (15,7) circle (5pt);
\draw[color=black] (15,7) node {2};
\draw [color=black] (17,5) circle (5pt);
\draw[color=black] (17,5) node {3};
\draw [color=black] (17,7) circle (5pt);
\draw[color=black] (17,7) node {1};
\draw [color=black] (19,7) circle (5pt);
\draw[color=black] (19,7) node {2};
\draw [color=black] (1,9) circle (5pt);
\draw[color=black] (1,9) node {3};
\draw [color=black] (1,11) circle (5pt);
\draw[color=black] (1,11) node {1};
\draw [color=black] (3,11) circle (5pt);
\draw[color=black] (3,11) node {2};
\draw [color=black] (1,13) circle (5pt);
\draw[color=black] (1,13) node {3};
\draw [color=black] (1,15) circle (5pt);
\draw[color=black] (1,15) node {1};
\draw [color=black] (3,15) circle (5pt);
\draw[color=black] (3,15) node {2};
\draw [color=black] (1,17) circle (5pt);
\draw[color=black] (1,17) node {3};
\draw [color=black] (1,19) circle (5pt);
\draw[color=black] (1,19) node {1};
\draw [color=black] (3,19) circle (5pt);
\draw[color=black] (3,19) node {2};
\draw [color=black] (5,9) circle (5pt);
\draw[color=black] (5,9) node {3};
\draw [color=black] (5,11) circle (5pt);
\draw[color=black] (5,11) node {1};
\draw [color=black] (7,11) circle (5pt);
\draw[color=black] (7,11) node {2};
\draw [color=black] (5,13) circle (5pt);
\draw[color=black] (5,13) node {3};
\draw [color=black] (5,15) circle (5pt);
\draw[color=black] (5,15) node {1};
\draw [color=black] (7,15) circle (5pt);
\draw[color=black] (7,15) node {2};
\draw [color=black] (5,17) circle (5pt);
\draw[color=black] (5,17) node {3};
\draw [color=black] (5,19) circle (5pt);
\draw[color=black] (5,19) node {1};
\draw [color=black] (7,19) circle (5pt);
\draw[color=black] (7,19) node {2};
\draw [color=black] (9,19) circle (5pt);
\draw[color=black] (9,19) node {1};
\draw [color=black] (11,19) circle (5pt);
\draw[color=black] (11,19) node {2};
\draw [color=black] (9,17) circle (5pt);
\draw[color=black] (9,17) node {3};
\draw [color=black] (9,15) circle (5pt);
\draw[color=black] (9,15) node {1};
\draw [color=black] (11,15) circle (5pt);
\draw[color=black] (11,15) node {2};
\draw [color=black] (9,13) circle (5pt);
\draw[color=black] (9,13) node {3};
\draw [color=black] (9,11) circle (5pt);
\draw[color=black] (9,11) node {1};
\draw [color=black] (11,11) circle (5pt);
\draw[color=black] (11,11) node {2};
\draw [color=black] (9,9) circle (5pt);
\draw[color=black] (9,9) node {3};
\draw [color=black] (13,9) circle (5pt);
\draw[color=black] (13,9) node {3};
\draw [color=black] (13,11) circle (5pt);
\draw[color=black] (13,11) node {1};
\draw [color=black] (15,11) circle (5pt);
\draw[color=black] (15,11) node {2};
\draw [color=black] (13,13) circle (5pt);
\draw[color=black] (13,13) node {3};
\draw [color=black] (13,15) circle (5pt);
\draw[color=black] (13,15) node {1};
\draw [color=black] (15,15) circle (5pt);
\draw[color=black] (15,15) node {2};
\draw [color=black] (13,17) circle (5pt);
\draw[color=black] (13,17) node {3};
\draw [color=black] (13,19) circle (5pt);
\draw[color=black] (13,19) node {1};
\draw [color=black] (15,19) circle (5pt);
\draw[color=black] (15,19) node {2};
\draw [color=black] (17,9) circle (5pt);
\draw[color=black] (17,9) node {3};
\draw [color=black] (17,11) circle (5pt);
\draw[color=black] (17,11) node {1};
\draw [color=black] (19,11) circle (5pt);
\draw[color=black] (19,11) node {2};
\draw [color=black] (17,13) circle (5pt);
\draw[color=black] (17,13) node {3};
\draw [color=black] (17,15) circle (5pt);
\draw[color=black] (17,15) node {1};
\draw [color=black] (19,15) circle (5pt);
\draw[color=black] (19,15) node {2};
\draw [color=black] (17,17) circle (5pt);
\draw[color=black] (17,17) node {3};
\draw [color=black] (17,19) circle (5pt);
\draw[color=black] (17,19) node {1};
\draw [color=black] (19,19) circle (5pt);
\draw[color=black] (19,19) node {2};
\end{scriptsize}
\end{tikzpicture}
 \caption{Types of $\omega$-vector weighted digraphs with $3$ labeled vertices}
\label{figure:table3}
\end{figure}

\begin{lemma}
	The number of $\omega$-equivalence classes of Type $8$ is $\dfrac{2k^3+9k^2+k}{6}$  when $n_1=2k$ and $\dfrac{(k+1)(k^2+5k+3)}{3}$ when $n_1=2k+1$.
\end{lemma}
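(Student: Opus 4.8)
My plan is to reduce the enumeration of Type $8$ classes to an orbit problem for a Klein four-group and then finish by Burnside's lemma together with a short count of the degenerate orbits. In Type $8$ the vertex $v_1$ is a source with two outgoing edges $v_1\to v_2$ and $v_1\to v_3$ of weights $\vec u,\vec u'\in\mathbb{Z}_2^{n_1}\setminus\{\mathbf 0\}$. Since $n_1<n_2<n_3$ no reordering of vertices is permitted, and the local complementations at $v_2,v_3$ act trivially because those vertices have empty out-neighbourhoods. Thus the only operations are the simultaneous coordinate permutations $S_{n_1}$ at $v_1$ and the $(\sigma,k)$-local complementations at $v_1$. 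As $v_1$ has no in-neighbours, such a complementation alters only the two weights, and I would check that, modulo the $S_{n_1}$-action, each one reduces to the move that flips $\vec u$ off a chosen coordinate $k$ with $(\vec u)_k=1,(\vec u')_k=0$ while fixing $\vec u'$ (and the two symmetric moves obtained from coordinates of the other two nonzero types). The permutation $\sigma$ is absorbed by a subsequent coordinate permutation; verifying this cancellation is the one routine-but-delicate point, since a careless convention makes it look as though $\sigma$ could annihilate $\vec u$.

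Next I would pass to the complete $S_{n_1}$-invariant of a pair, namely the four column-type counts $(n_{00},n_{01},n_{10},n_{11})$ recording how many coordinates $j$ have $\big((\vec u)_j,(\vec u')_j\big)$ equal to each element of $\mathbb{Z}_2^2$, subject to $n_{10}+n_{11}\ge1$ and $n_{01}+n_{11}\ge1$. A direct computation shows the three basic complementations act by
\begin{align*}
T_{10}&:(n_{00},n_{01},n_{10},n_{11})\mapsto(n_{10}-1,n_{11},n_{00}+1,n_{01}),\\
T_{01}&:(n_{00},n_{01},n_{10},n_{11})\mapsto(n_{01}-1,n_{00}+1,n_{11},n_{10}),\\
T_{11}&:(n_{00},n_{01},n_{10},n_{11})\mapsto(n_{11}-1,n_{10},n_{01},n_{00}+1),
\end{align*}
each defined when the relevant count is positive. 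The heart of the argument is the substitution $g(0,0)=n_{00}+1$ and $g(s)=n_s$ for $s\neq(0,0)$: after it, $T_{10},T_{01},T_{11}$ become exactly the three nontrivial translations of $\mathbb{Z}_2^2$ acting on a function $g\colon\mathbb{Z}_2^2\to\mathbb{Z}_{\ge0}$ with $\sum_s g(s)=n_1+1$. Hence two pairs are $\omega$-equivalent iff their functions $g$ lie in the same orbit of the Klein four-group $V=\mathbb{Z}_2^2$; one checks that every $V$-orbit has a representative with $g(0,0)\ge1$ and that all such representatives are mutually reachable, so the partial action is the honest regular action.

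It then remains to count $V$-orbits, for which I would apply Burnside's lemma to $V$ acting on $X=\{g:\sum_s g(s)=n_1+1\}$, where $|X|=\binom{n_1+4}{3}$. A nontrivial translation fixes $g$ precisely when $g$ is constant on the two cosets of a line through the origin, which forces $n_1$ odd and contributes $\tfrac{n_1+3}{2}$ fixed points (and none when $n_1$ is even). This produces the total orbit count $O_{\mathrm{all}}$. Finally I subtract the orbits failing $\vec u\neq\mathbf 0$ or $\vec u'\neq\mathbf 0$: these are exactly the orbits of functions supported on the line $\{(0,0),(0,1)\}$ or on $\{(0,0),(1,0)\}$, and they are parametrized by the multiset $\{g(0,0),g(s)\}$ of sum $n_1+1$, giving $O_{\mathrm{deg}}=2\lfloor(n_1+1)/2\rfloor+1$ once the doubly-counted all-zero pair is removed.

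Carrying out $O_{\mathrm{all}}-O_{\mathrm{deg}}$ in the two parities then yields $\tfrac{2k^3+9k^2+k}{6}$ for $n_1=2k$ and $\tfrac{(k+1)(k^2+5k+3)}{3}$ for $n_1=2k+1$, as a short algebraic simplification of $\tfrac14\binom{n_1+4}{3}$ (plus the odd-case correction) minus $2\lfloor(n_1+1)/2\rfloor+1$. The main obstacle is the linearizing substitution: before it, the three involutions present themselves as unrelated ``fold'' maps whose orbits have sizes $1,2,3,4$ and are awkward to enumerate, whereas after it the problem collapses to the transparent Burnside computation above; the only other care needed is the bookkeeping of the degenerate orbits and of the parity of $n_1$.
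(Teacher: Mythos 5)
Your proposal is correct, and the final arithmetic checks out in both parities ($\tfrac14\binom{2k+4}{3}-(2k+1)=\tfrac{2k^3+9k^2+k}{6}$ and $\tfrac14\bigl(\binom{2k+5}{3}+3\cdot\tfrac{2k+4}{2}\bigr)-(2k+3)=\tfrac{(k+1)(k^2+5k+3)}{3}$), but it takes a genuinely different route from the paper. The paper keeps the group $S_{n_1+1}$ and applies Burnside directly to its action on $\bigl(\mathbb{Z}_2^{n_1}\setminus\{\mathbf 0\}\bigr)^2$: it shows the fixed-point count of $\sigma$ is $(2^{m-1}-1)^2$ unless $\sigma$ moves $n_1+1$ and consists entirely of even cycles, in which case it is $(2^m-1)^2$, and then evaluates the resulting sums via the Stirling-number identities $x^{\overline{n}}=\sum_m c(n,m)x^m$ and the $c_2$ analogue of Lemma 5.2. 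You instead pass to the complete $S_{n_1}$-invariant $(n_{00},n_{01},n_{10},n_{11})$ and observe that the three residual complementation moves linearize, after the shift $g(0,0)=n_{00}+1$, to the translation action of $\mathbb{Z}_2^2$ on weak compositions of $n_1+1$ into four parts; Burnside over a four-element group plus removal of the degenerate orbits then finishes the count. I checked the key claims you flagged as routine: the three maps $T_{10},T_{01},T_{11}$ are exactly what the $(\sigma,k)$-complementation induces on column counts once $\sigma$ is absorbed into a subsequent coordinate permutation (with the paper's $e_{\sigma^{-1}(k)}=\mathbf 1+\delta_{\sigma^{-1}(k)}$, the move flips every coordinate except the $k$-th of each weight whose $k$-th coordinate is $1$); the moves preserve the conditions $n_{10}+n_{11}\ge 1$ and $n_{01}+n_{11}\ge 1$; and within a $V$-orbit any two functions with $g(0,0)\ge 1$ are joined by a single legal move, so the partial action does globalize. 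Your approach buys a more elementary and more transparent computation (no Stirling numbers, and it exposes the hidden $\mathbb{Z}_2^2$-symmetry governing the equivalence), at the price of the linearization bookkeeping; the paper's heavier cycle-type machinery has the advantage that it is the same machinery reused, with more cases, for the Type 11/23 count in the next lemma, where your reduction would not apply as cleanly.
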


\begin{proof} We use Burnside's lemma. Let $X=\big(\mathbb{Z}_2^{n_1}\setminus \{\mathbf{0}\}\big)\times \big(\mathbb{Z}_2^{n_1}\setminus \{\mathbf{0}\}\big)$. Then the set of $\omega$-equivalence classes of Type $8$ is in one-to-one correspondence with the orbit space of the action of $S_{n_1+1}$ on $X$ defined by
\begin{eqnarray*}
	\sigma \cdot (v,w)
	=\begin{cases}
	 (\overline{\sigma}(v), \overline{\sigma}(w)) & \text{ if } v \in S_{\sigma} \ \text{and} \ w\in S_{\sigma}
	\\	
		(\overline{\sigma}(v)+e, \overline{\sigma}(w)) & \text{ if } v \notin S_{\sigma} \ \text{and} \ w\in S_{\sigma}
	\\
		(\overline{\sigma}(v), \overline{\sigma}(w)+e) & \text{ if } v \in S_{\sigma} \ \text{and} \ w\notin S_{\sigma}  \\
		(\overline{\sigma}(v)+e, \overline{\sigma}(w)+e) & \text{otherwise.} 
	\end{cases}
\end{eqnarray*} where $e=e_{\sigma^{-1}(n_1+1)}$ and $S_{\sigma}=\{v \in \mathbb{Z}_2^{n_1}\setminus \{\mathbf{0}\}|  (v)_{\sigma(n_1+1)}=0 \ \text{if} \ \sigma(n_1+1) \neq n_1+1\}$. Let $\lambda_1^{m_1}\cdots \lambda_k^{m_k}$ be the cycle type of $\sigma$ with $\lambda_1 <\lambda_2< \cdots< \lambda_k$ and $m=\underset{i}{\sum}m_i$. If $\sigma$ fixes $n_1+1$ then $(v,w)$ is fixed by $\sigma$ if and only if the values of the coordinates of $v$ and $w$ corresponding to the same cycle are the same. Therefore the number of fixed points of $\sigma$ is $(2^{m-1}-1)^2$ when $\sigma(n_1+1)=n_1+1$ since the cycle decomposition of $\overline{\sigma}=\lambda_1^{m_1-1}\cdots \lambda_k^{m_k}$ and at least one of the coordinates of $v$ and $w$ are non-zero.

Suppose that $\sigma(n_1+1)\neq n_1+1$. In this case the number of disjoint cycles of $\sigma$ and $\overline{\sigma}$ are the same.  Note that if $\overline{\sigma}(v)+e=v$ for some $v\in \mathbb{Z}_2^{n_1}\setminus \{\mathbf{0}\}$ then $\lambda_i$ must be even for $1\leq i\leq k$. Therefore if there is a cycle of odd length in the cycle decomposition of $\sigma$ and $(v,w)$ is fixed by $\sigma$ then $(v)_{\sigma(n_1+1)}=(w)_{\sigma(n_1+1)}=0$. In this case, we have $(v,w)=(\overline{\sigma}(v), \overline{\sigma}(w))$. Since all the coordinates of $v$ and $w$ corresponding to cycle containing $\sigma(n_1+1)$ are $0$, the number of fixed points of $\sigma$ is $(2^{m-1}-1)^2$ when $\lambda_i$ is odd for some $i$. On the other hand the number of elements $v\in S_{\sigma}$ satisfying the condition $\overline{\sigma}(v)+e=v$ is $2^{m-1}$. Therefore if all the $\lambda_i$ are even then $\sigma$ fixes $(2^{m-1}-1)^2$ elements in $S_{\sigma}\times S_{\sigma}$, $(2^{m-1})^2$ elements in $S_{\sigma}' \times S_{\sigma}'$ and $2(2^{m-1})(2^{m-1}-1)$ elements in $S_{\sigma}\times S_{\sigma}' \cup S_{\sigma}'\times S_{\sigma}$. Therefore there are $(2^m-1)^2$ elements of $X$ fixed by $\sigma$ when $\sigma$ contains cycles of even lengths only.

Therefore the number of $\omega$-equivalence classes of Type $8$ is given by the following formula
\begin{eqnarray*}&&\dfrac{1}{(n_1+1)!}\sum_{m=1}^{n_1+1} \Big((2^{m-1}-1)^2(c(n_1+1,m)-c_2(n_1+1,m))+ (2^m-1)^2c_2(n_1+1,m) \Big)\\ &=&\dfrac{1}{(n_1+1)!} \sum_{m=1}^{n_1+1}(2^{m-1}-1)^2c(n_1+1,m)+\dfrac{1}{(n_1+1)!}\sum_{m=1}^{n_1+1}(3\cdot 4^{m-1}-2^m)c_2(n_1+1,m)
	\end{eqnarray*}
by Burnside lemma. Since $(2^{m-1}-1)^2=\dfrac{4^m}{4}-2^m+1$, the first sum is equal to $
\dfrac{n_1^3+9n_1^2+2n_1}{24}$ by the formula (\ref{rising_Stirling}). Since $c_2(n_1+1,m)=0$ for even $n_1$, the number of $\omega$-equivalence classes of Type $8$ is $\dfrac{2k^3+9k^2+k}{6}$ when $n_1=2k$. When $n_1=2k+1$, the second sum is equal to $
\dfrac{3k+2}{4}$ by the Lemma \ref{lem:eqfordrising} and hence the number of $\omega$-equivalence classes of Type $8$ is $\dfrac{(k+1)(k^2+5k+3)}{3}$.
\end{proof}

Since $\omega$-vector weighted digraph of Type $11$ can only be $\omega$-equivalent to that of Type $11$ or Type $23$ and vice a versa, we need to consider their union that is obtained by allowing $w'$ to be zero in Type $23$. Using the same idea of the above proof, we obtain the following result.
\begin{lemma} The number of $w$-equivalence classes of Type $11$ and Type $23$ is equal to $h(n_1,n_3)$ where
\begin{eqnarray*} h(n,m)=\begin{cases} \displaystyle \frac{nm(m^2+9m+14)}{48} \quad & \text{ if $n$ and $m$ are even,} \\
		\displaystyle \frac{n(m^3+9m^2+23m+15)}{48} \quad & \text{ if $n, \ m$ are even and odd, respectively,} \\
		\displaystyle \frac{nm(m^2+9m+14)+3m(m+2)}{48} \quad & \text{ if $n, \ m$ are odd and even, respectively,} \\
		\displaystyle \frac{n(m^3+9m^2+23m+15)+3(m^2+2m-3)}{48} \quad & \text{ if $n$ is odd and $m\equiv 1$ (mod $4$),} \\
		\displaystyle \frac{n(m^3+9m^2+23m+15)+3(m^2+2m+1)}{48} \quad & \text{ if $n$ is odd and $m\equiv 3$ (mod $4$).}\end{cases}
		\end{eqnarray*}
\end{lemma}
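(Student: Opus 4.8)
The plan is to realize the union of Type $11$ and Type $23$ as an orbit-counting problem and apply Burnside's lemma, exactly as in the computation for Type $8$, but now keeping track of two interacting local complementations. Writing $u$ for the weight of the edge $v_1\to v_2$, $w$ for that of $v_3\to v_1$ and $w'$ for that of $v_3\to v_2$ (with $w'=\mathbf 0$ recovering Type $11$), the union is
\[
Y=\big(\mathbb{Z}_2^{n_1}\setminus\{\mathbf 0\}\big)\times\big(\mathbb{Z}_2^{n_3}\setminus\{\mathbf 0\}\big)\times \mathbb{Z}_2^{n_3}.
\]
Since $n_1<n_2<n_3$ there is no reordering of vertices, and as $v_2$ is a sink the factor $S_{n_2+1}$ acts trivially, so the relevant group is $S_{n_1+1}\times S_{n_3+1}$. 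By the Lemma on the action of $\sigma^i_j$, an element $g_1\in S_{n_1+1}$ acts on $u$ by the same single-vector rule used for Type $8$ and simultaneously replaces $w'$ by $w'+\delta\, w$, where $\delta=(u)_{\sigma_1(n_1+1)}$ is the toggle bit (and $\delta=0$ when $g_1$ fixes $n_1+1$), while $g_3\in S_{n_3+1}$ acts on the pair $(w,w')$ by the Type $8$ pair-rule, the only difference being that here $w'$ is allowed to vanish.

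First I would record that the number of triples fixed by $(g_1,g_3)$ splits as $A_0(g_1)N_0(g_3)+A_1(g_1)N_1(g_3)$, where $A_\delta(g_1)$ counts the $g_1$-fixed $u$ with toggle bit $\delta$, and $N_\delta(g_3)$ counts the pairs $(w,w')$ with $w\neq\mathbf 0$ fixed by the $\delta$-twisted action of $g_3$. Because $A_\delta$ depends only on $g_1$ and $N_\delta$ only on $g_3$, Burnside's lemma factorizes the double sum, and the count becomes $\bar A_0\Sigma_0+\bar A_1\Sigma_1$ with $\bar A_\delta=\frac{1}{(n_1+1)!}\sum_{g_1}A_\delta(g_1)$ and $\Sigma_\delta=\frac{1}{(n_3+1)!}\sum_{g_3}N_\delta(g_3)$. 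The decisive structural observation is that $A_1(g_1)\neq 0$ forces every cycle of $g_1$ to be even (otherwise a $g_1$-fixed $u$ has its $\sigma_1(n_1+1)$-coordinate equal to $0$), which is possible only when $n_1+1$ is even. Hence $\bar A_1=0$ for even $n_1$, so the answer reduces to $\bar A_0\Sigma_0$, whereas the extra term $\bar A_1\Sigma_1$ appears precisely when $n_1$ is odd; this is exactly the dichotomy between the first two and the last three cases of $h$.

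Next I would evaluate the four averaged sums. The quantities $\bar A_0,\bar A_1$ are $S_{n_1+1}$-sums of $2^{m-1}-1$ and $2^{m-1}$ weighted by cycle counts, computed from the identity (\ref{rising_Stirling}) for $c(n,m)$ together with the all-even count $c_2(n_1+1,m)$ via Lemma \ref{lem:eqfordrising}; these give the linear-in-$n_1$ factors and split by the parity of $n_1$. For the untwisted sum $\Sigma_0$ the pair $(w,w')$ is fixed iff each of $w,w'$ is individually $g_3$-fixed, so $\Sigma_0$ is a cubic polynomial in $n_3$ obtained just as in the Type $8$ computation (with the non-vanishing constraint on the second vector dropped), its two forms reflecting the parity of $n_3$ through $c_2(n_3+1,\cdot)$. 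Finally I would assemble the five cases by substituting the closed forms into $\bar A_0\Sigma_0+\bar A_1\Sigma_1$ and simplifying.

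The main obstacle is the twisted sum $\Sigma_1$. When $\delta=1$ the fixing condition reads $w=B_{g_3}(w)$ together with the affine equation $w'=B_{g_3}(w')+w$, where $B_{g_3}$ is the single-weight action of $g_3$; the number of admissible $w'$ therefore depends on a solvability (coset) condition relating $w$ to the map $\mathrm{id}+B_{g_3}$, which is governed by the precise even/odd cycle structure of $g_3$ and not merely by its number of cycles. Resolving this forces me to refine the Burnside sum by the number $e$ of even cycles and to invoke the enumerations of Section \ref{sect:permutations}; in particular Proposition \ref{prop:mandev} and Corollary \ref{cor:mandev-1}, whose values carry a genuine $n_3\bmod 4$ dependence, are what ultimately separate the $m\equiv 1$ and $m\equiv 3\ (\mathrm{mod}\ 4)$ cases. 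I expect the bookkeeping of this affine solvability condition across cycle types, rather than any single identity, to be the hardest part.
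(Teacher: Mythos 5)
Your proposal follows essentially the same route as the paper: both realize the union of Types 11 and 23 as the orbit space of the same $S_{n_1+1}\times S_{n_3+1}$-action on $(\mathbb{Z}_2^{n_1}\setminus\{\mathbf 0\})\times(\mathbb{Z}_2^{n_3}\setminus\{\mathbf 0\})\times\mathbb{Z}_2^{n_3}$ and apply Burnside's lemma, counting fixed points by solving the resulting $\mathbb{Z}_2$-linear and affine systems cycle by cycle. Your factorization of the fixed-point count as $A_0(g_1)N_0(g_3)+A_1(g_1)N_1(g_3)$, and hence of the Burnside average as $\bar A_0\Sigma_0+\bar A_1\Sigma_1$, is a cleaner bookkeeping than the paper's four-case split (by whether $\alpha$ and $\beta$ fix $n_1+1$ and $n_3+1$, with further sub-cases by membership of $w,w'$ in $S_\beta$), and it correctly isolates why the extra terms appear exactly for odd $n_1$; but it is the same computation. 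One correction: the $n_3\bmod 4$ distinction does not come from Proposition \ref{prop:mandev} or Corollary \ref{cor:mandev-1}, whose values depend only on the parity of their argument. It comes from the sub-cases of the twisted count $\Sigma_1$ in which the affine system for $(w,w')$ is solvable only when every cycle of $\beta$, including the one through $\beta(n_3+1)$, has length divisible by $4$; the relevant enumeration is therefore $c_4(n_3+1,m)$, evaluated via Lemma \ref{lem:eqfordrising} with $d=4$, which is nonzero precisely when $n_3\equiv 3\pmod 4$. With that replacement your plan for $\Sigma_1$ goes through exactly as in the paper's Case 4.
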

\begin{proof} The number of $\omega$-equivalence classes of the union is equl to the size of the orbit space of $S_{n_1+1}\times S_{n_3+1}$-action on the set $X=\big(\mathbb{Z}_2^{n_1}\setminus \{\mathbf{0}\}\big) \times \big(\mathbb{Z}_2^{n_3}\setminus \{\mathbf{0}\}\big) \times \mathbb{Z}_2^{n_3}$ that is defined by
\begin{eqnarray*}
	(\sigma,\beta)\cdot (u,w,w')
	=\begin{cases}
	 (\overline{\sigma}(u), \overline{\beta}(w),\overline{\beta}(w') ) & \text{ if } u \in S_{\sigma}\ \text{and} \ w,w' \in S_{\beta}
	\\	
		(\overline{\sigma}(u), \overline{\beta}(w),\overline{\beta}(w')+e_2) & \text{ if } u \in S_{\sigma}, \ w \in S_{\beta}  \ \text{and} \ w'\notin S_{\beta}	\\
		(\overline{\sigma}(u), \overline{\beta}(w)+e_2,\overline{\beta}(w') ) & \text{ if } u \in S_{\sigma} \ w \notin S_{\beta}  \ \text{and} \ w'\in S_{\beta}\\
		(\overline{\sigma}(u), \overline{\beta}(w)+e_2,\overline{\beta}(w')+e_2) &\text{ if } u \in S_{\sigma} \ \text{and} \ w, w'\notin S_{\beta}\\
		(\overline{\sigma}(u)+e_1, \overline{\beta}(w),\overline{\beta}(w+w')) & \text{ if } u \notin S_{\sigma} \ \text{and} \ w,w'\in S_{\beta}
	\\	
		(\overline{\sigma}(u)+e_1, \overline{\beta}(w), \overline{\beta}(w+w')+e_2 ) & \text{ if } u \notin S_{\sigma} \ w \in S_{\beta}  \ \text{and} \ w'\notin S_{\beta}	\\
		(\overline{\sigma}(u)+e_1, \overline{\beta}(w)+e_2,\overline{\beta}(w+w')+e_2 ) & \text{ if } u \notin S_{\sigma} \ \ w \notin S_{\beta}  \ \text{and} \ w'\in S_{\beta}	\\
				(\overline{\sigma}(u)+e_1, \overline{\beta}(w)+e_2,\overline{\beta}(w+w')) & \text{otherwise.} 

	\end{cases}
\end{eqnarray*} where $e_1=e_{\sigma^{-1}(n_1+1)}$, $e_2=e_{\beta^{-1}(n_3+1)}$, $S_{\sigma}$ is defined as in the above proof, and $S_{\beta}=\{v \in \mathbb{Z}_2^{n_3}|  (v)_{\beta(n_3+1)}=0 \ \text{if} \ \beta(n_3+1) \neq n_3+1\}$.  We find the size of the orbit space of this action by calculating the number of fixed points of $(\alpha,\beta)$ as above. For this, we consider four cases depending on whether $\alpha$ and $\beta$ fix $n_1+1$ and $n_3+1$, respectively. Let the number of disjoint cycles of $\alpha$ and $\beta$ be $m_1$ and $m_2$, respectively and the number of cycles of even lengths in the cycle decompositions of $\alpha$ and $\beta$ be $e_1$ and $e_2$, respectively.\\

\noindent \textbf{Case 1:} Let $\alpha(n_1+1)=n_1+1$ and $\beta(n_3+1)=n_3+1$. We need to find pairs $(u,w,w')$ satisfying $\overline{\sigma}(u)=u, \overline{\beta}(w)=w$ and $\overline{\beta}(w')=w'$. As in the above proof, we can easily find that there are $(2^{m_1-1}-1)(2^{m_2-1}-1)2^{m_2-1}$ elements fixed by $(\sigma,\beta)$. Therefore the number of points fixed by a pair of this type is
\begin{eqnarray*} \sum_{m_1=1}^{n_1+1}\sum_{m_2=1}^{n_3+1}(2^{m_1-1}-1)(2^{m_2-1}-1)2^{m_2-1}c(n_1,m_1-1)c(n_3,m_2-1)=\dfrac{(n_1!)n_1(n_3+1)!n_3(n_3+5)}{6}.
	\end{eqnarray*}
by the formula (\ref{rising_Stirling}).\\

\noindent \textbf{Case 2:} Let $\alpha(n_1+1)=n_1+1$ and $\beta(n_3+1)\neq n_3+1$. The fixed points of a pair of this type can be counted as in the above Lemma by taking the cases where $w=0$ into an account. Hence the number of fixed points is equal to
\begin{eqnarray*}
	\begin{cases}
	(2^{m_1-1}-1)(4^{m_2}-2^{m_2}) \quad& \text{ when all the cycles of $\beta$ have even lengths,} \\
			(2^{m_1-1}-1)(4^{m_2-1}-2^{m_2-1}) \quad& \text{otherwise }.	\end{cases}
		\end{eqnarray*}
Note that the number of permutations that do not fix $n_3+1$ and have $m$ disjoint cycles is equal to $
n_3c(n_3,m)$. Since a permutation consists of cycles of even lengths only cannot fix an element, there are $c_2(n_3+1,m)$ permutations that do not fix $n_3+1$ and have $m$ disjoint cycles all of which have even lengths. Therefore the sum of number of fixed points of pairs of this type is
\begin{eqnarray*}&& \sum_{m_1=1}^{n_1+1}\sum_{m_2=1}^{n_3+1}(2^{m_1-1}-1)c(n_1,m_1-1)\Big((4^{m_2-1}-2^{m_2-1})(n_3c(n_3,m_2)-c_2(n_3+1,m_2))+(4^{m_2}-2^{m_2})c_2(n_3+1,m_2)\Big)\\
	&=& (n_1!)n_1 \Bigg(n_3\sum_{m_2=1}^{n_3+1}\big(4^{m_2-1}-2^{m_2-1})c(n_3,m_2)+\sum_{m_2=1}^{n_3+1}\big(3\cdot 4^{m_2-1}-2^{m_2-1}\big)c_2(n_3+1,m_2)\Bigg) \\
	&=&(n_1!)n_1 \Bigg(\frac{(n_3+1)!n_3(n_3+6)(n_3-1)}{24} + \frac{3}{4}\sum_{m_2=1}^{n_3+1} 4^{m_2}c_2(n_3+1,m_2)- \frac{1}{2}\sum_{m_2=1}^{n_3+1}2^{m_2}c_2(n_3+1,m_2) \Bigg)
\end{eqnarray*}

When $n_3$ is even, $c_2(n_3+1,m_2)=0$ and hence the above sum is equal to $\displaystyle (n_1!)n_1 \frac{(n_3+1)!n_3(n_3+6)(n_3-1)}{24}.$ Otherwise, we have 
$$\sum_{m_2=1}^{n_3+1} 4^{m_2}c_2(n_3+1,m_2)=(n_3+1)!\cdot(\frac{n_3+3}{2}) \ \mathrm{and} \ \ \sum_{m_2=1}^{n_3+1}2^{m_2}c_2(n_3+1,m_2)=(n_3+1)!$$ and hence the above sum is equal to $\displaystyle (n_1!)n_1 \frac{(n_3+1)!(n_3^3+5n_3^2+3n_3+15)}{24}.$ \\

\noindent \textbf{Case 3:} Let $\alpha(n_1+1)\neq n_1+1$ and $\beta(n_3+1)=n_3+1$. Clearly $(\alpha,\beta)$ fixes $(2^{m_1-1}-1)(2^{m_2-1}-1)2^{m_2-1}$ many $(u,w,w')$ for which $\alpha(n_1+1)$-th coordinate of $u$ is zero, that is $(u)_{\alpha(n_1+1)}=0$. Let us now consider $(u,w,w')$'s with $(u)_{\alpha(n_1+1)}\neq 0$, that is $u\notin S_{\alpha}$. In this case $\alpha(u)+e_1=u$ has a solution if and only if all the cycles of $\alpha$ have even lengths and there are $2^{m_1-1}-1$ many $u$'s satisfying this relation. We also need to find $(w,w')$ satisfying the equations $\overline{\beta}(w)=w$, $\overline{\beta}(w+w')$ and $w \neq 0$. Let $(i_1,i_2,\cdots,i_k)$ be a cycle of $\overline{\beta}$. By the first equation, $w_{i_1}=\cdots=w_{i_j}$, say $w_{i_1}=a$. By the second equation, we have $$w'_{i_j}=a+w_{i_{j}+1}, \ \text{for} \ 1\leq j\leq k-1 \ \text{and} \ w'_{i_k}=a+w_{i_1}.$$
Adding up these equations gives $ka \equiv 0 \ (\text{mod} \ 2)$. If $k$ is even, the matrix $(w|w')$ must consists of the blocks of one of the forms $\begin{pmatrix} 1 & \rvline & 0 \\ 1 & \rvline & 1 \end{pmatrix}, \begin{pmatrix} 1 & \rvline & 1 \\ 1 & \rvline & 0 \end{pmatrix}, \begin{pmatrix} 0 & \rvline & 0 \\ 0 & \rvline & 0 \end{pmatrix},$ or $\begin{pmatrix} 0 & \rvline & 1 \\ 0 & \rvline & 1 \end{pmatrix}$. If $k$ is odd then $a=0$ and hence the values of the coordinates of $w'$ corresponding to this cycle are either all $0$ or all $1$. Since $w \neq 0$, the number of $(w,w')$'s satisfying the above equations is $4^{e_2}2^{(m_2-e_2)-1}-2^{e_2}2^{(m_2-e_2)-1}=2^{m_2-1}(2^{e_2}-1).$ Therefore the number of the points fixed by $(\sigma,\beta)$ is
\begin{eqnarray*}
	\begin{cases}
		(2^{m_1-1}-1)(4^{m_2-1}-2^{m_2-1})+2^{m_1-1}2^{m_2-1}(2^{e_2}-1) \quad& \text{when all the cycles of $\alpha$ have even lengths,} \\
		(2^{m_1-1}-1)(4^{m_2-1}-2^{m_2-1}) \quad& \text{otherwise }.	\end{cases}
\end{eqnarray*}
Hence the number of all the elements of $X$ fixed by an element of this type is given by the following formula
\begin{eqnarray*}&=& n_1\sum_{m_1=1}^{n_1+1}\sum_{m_2=1}^{n_3+1}(2^{m_1-1}-1)(4^{m_2-1}-2^{m_2-1})c(n_1,m_1)c(n_3,m_2-1)\\&+&	\sum_{m_1=1}^{n_1+1}\sum_{m_2=1}^{n_3+1}\sum_{e_2=1}^{m_2}2^{m_1-1}2^{m_2-1}(2^{e_2}-1)c_2(n_1+1,m_1)c(n_3,m_2-1,e_2)\\
\end{eqnarray*}
The first term of this sum is $\displaystyle (n_1!)(n_3+1)!\frac{n_1(n_1-1)}{2}\frac{n_3(n_3+5)}{6}.$ The second term is zero when $n_1$ is an even number. When $n_1$ is an odd number, the second term depends on the parity of $n_3$ and we have
\begin{eqnarray*}
	\begin{cases}
		\displaystyle 	\frac{(n_1+1)!\cdot n_3!\cdot (n_3)^2}{8}\quad & \text{ if } n_3 \text{ is even number} \\
		\displaystyle \frac{(n_1+1)!\cdot (n_3+1)!\cdot (n_3-1)}{8} \quad & \text{ if } n_3 \text{ is odd number.} 
	\end{cases}
\end{eqnarray*} So the above sum is given by the following formula

\begin{eqnarray*}
	\begin{cases}
		\displaystyle   \frac {n_1! (n_1)(n_1-1)(n_3+1)!n_3(n_3+5)}{12} & \text{if $n_1$ is even.} \\
 \frac {n_1! (n_1)(n_1-1)(n_3+1)!n_3(n_3+5)}{12}+\frac{(n_1+1)!\cdot n_3!\cdot (n_3)^2}{8} & \text{if $n_1,n_3$ are odd and even, respectively.}\\
 \frac {n_1! (n_1)(n_1-1)(n_3+1)!n_3(n_3+5)}{12}+\frac{(n_1+1)!\cdot (n_3+1)!\cdot (n_3-1)}{8} & \text{if $n_1,n_3$ are odd and odd, respectively.}	
	\end{cases}
\end{eqnarray*}
\noindent \textbf{Case 4:} Let $\alpha(n_1+1)\neq n_1+1$ and $\beta(n_3+1)\neq n_3+1$. It follows similarly as in the proof of the above lemma that the number of $(u,w,w')$ satisfying $(u)_{\alpha(n_3+1)}=0$ and fixed by an element of this type is equal to
\begin{eqnarray*}
	\begin{cases}
		(2^{m_1-1}-1)(4^{m_2}-2^{m_2}) \quad& \text{ when all the cycles of $\beta$ have even lengths,} \\
		(2^{m_1-1}-1)(4^{m_2-1}-2^{m_2-1}) \quad& \text{otherwise }.	\end{cases}
\end{eqnarray*}
As above, $\overline{\sigma}(u)+e_1=u$ has a solution if and only if all the cycles of $\sigma$ has even lengths. From now on we assume that $\sigma$ consists of cycles of even lengths only and we solve the corresponding equations for $w$ and $w'$ depending on whether they are elements of $S_{\beta}$ or not. 

Let $(i_1.\cdots, i_k)$ be a cycle of $\beta$. If both $w$ and $w'$ are in $S_{\beta}$, we need to solve the equations
$$\overline{\beta}(w)=w, \ \overline{\beta}(w+w')=w', \ \text{and} \ (w)_{\beta(n_3+1)}=(w)_{\beta(n_3+1)}'=0.$$ By the first equation, we have $w_{i_1}=\cdots=w_{i_k}=a$ for some $a \in \{0,1\}$. If $\beta(n_3+1) \in \{i_1,\cdots,i_k\}$ then $w_{i_j}=w'_{i_j}=0$ for $1\leq j\leq k$ by the last equation. Otherwise $w'$ must satisfy the equations
$$w'_{i_j}=a+w'_{i_{j}+1}, \ \text{for} \ 1\leq j\leq k-1 \ \text{and} \ w'_{i_k}=a+w'_{i_1}.$$
As discussed in Case 3, there are $4$ solutions if $k$ is even and $2$ otherwise. Since $w$ can not be $0$, the number of $(u,w,w')$'s  with $u \notin S_{\sigma}$ and $w,w' \in S_{\beta}$ fixed by $(\alpha,\beta)$ is
\begin{eqnarray*}
	\begin{cases}
		2^{m_1-1}2^{m_2-1}(2^{e_2-1}-1) \quad& \text{when the the cycle containing $n_3+1$ has even length,} \\
		2^{m_1-1}2^{m_2-1}(2^{e_2}-1) \quad& \text{otherwise }.	\end{cases}
\end{eqnarray*}

If $w\in S_{\beta}$ and $w'\notin S_{\beta}$, we need to solve the equations
$$\overline{\beta}(w)=w, \ \ \overline{\beta}(w+w')+e_2=w', \ (w)_{\beta(n_3+1)}=0 \ \text{and} \ (w)_{\beta(n_3+1)}'=1.$$ Then $w_{i_1}=\cdots=w_{i_k}=a$ for some $a \in \{0,1\}$. If $\beta(n_3+1) \in \{i_1,\cdots,i_k\}$, say $\beta(n_3+1)=i_1$, then $a=0$ and $w'$ satisfies the equations 
$$w'_{i_j}=1+w'_{i_{j}+1}, \ \text{for} \ 1\leq j\leq k-1, \ \text{and} \ w'_{i_k}=w'_{i_1}=1.$$ Hence $k-1 \equiv 0 (\text{mod} \ 2)$, i.e, $k$ must be odd. When $k$ is odd, the above system has a unique solution.  Now suppose that $\beta(n_3+1) \notin \{i_1,\cdots,i_k\}$. By the second equation, we have $$w'_{i_j}=w_{i_{j}}+a+1, \ \text{for} \ 1\leq j\leq k-1 \ \text{and} \ w'_{i_k}=w_{i_1}+a+1.$$
Adding up these equations gives $k(a+1) \equiv 0 \ (\text{mod} \ 2)$. If $k$ is even, the matrix $(w|w')$ must consists of the blocks of one of the forms $\begin{pmatrix} 1 & \rvline & 0 \\ 1 & \rvline & 1 \end{pmatrix}, \begin{pmatrix} 1 & \rvline & 1 \\ 1 & \rvline & 0 \end{pmatrix}, \begin{pmatrix} 0 & \rvline & 0 \\ 0 & \rvline & 0 \end{pmatrix},$ or $\begin{pmatrix} 0 & \rvline & 1 \\ 0 & \rvline & 1 \end{pmatrix}$. If $k$ is odd then $a=1$ and hence the values of the coordinates of $w'$ corresponding to this cycle are either all $0$ or all $1$. Therefore the number of $(u,w,w')$'s  with $u \notin S_{\sigma}$, $w \in S_{\beta}$, and $w' \notin S_{\beta}$ fixed by $(\alpha,\beta)$ is
\begin{eqnarray*}
	\begin{cases}
        0 \quad& \text{when the the cycle containing $n_3+1$ has odd length,} \\
		2^{m_1-1}(4^{m_2-1}-2^{m_2-1}) \quad& \text{when all the cycles of $\beta$ have even lengths,} \\
		 2^{m_1-1}2^{m_2+e_2-2}\quad& \text{otherwise }.	\end{cases}
\end{eqnarray*}

Now suppose that $w\notin S_{\beta}$ and $w'\in S_{\beta}$. Then $(u,w,w')$ is a fixed points of $(\alpha,\beta)$ if
$$\overline{\beta}(w)+e_2=w, \ \ \overline{\beta}(w+w')+e_2=w', \ (w)_{\beta(n_3+1)}=1 \ \text{and} \ (w)_{\beta(n_3+1)}'=0.$$ If $\beta(n_3+1) \in \{i_1,\cdots,i_k\}$, say $\beta(n_3+1)=i_1$, then $w$ must  satisfy the equations $w_{i_j}=1+w_{i_{j}}$ for $1\leq j\leq k-1$ and $w_{i_k}=w_{i_1}=1.$  Hence $k-1 \equiv 0 (\text{mod} \ 2)$, i.e, $k$ must be odd. Let $k=2k'+1$. Then $w_{i_j}$ is equal to $1$ if $j$ is odd and $0,$ otherwise. Therefore $w'$ must satisfies the equations $w_{i_{2j+1}}'=w_{i_{2j+2}}+w'_{i_{2j+2}}+1$, $w_{i_{2j+2}}'=w_{i_{2j+3}}+w'_{i_{2j+3}}$ for $1\leq j\leq k'-1$ and $w_{i_1}'=0, w_{i_{2k'+1}}=1$. This forces $k'$ to be odd and hence the cycle containing $n_{3}+1$ must be divisible by $4$. In this case, we have a unique solution. If $\beta(n_3+1) \notin \{i_1,\cdots,i_k\}$, we need to simultaneously solve the equations  $$w_{i_j}=w_{i_{j}+1)}+1, \ w'_{i_j}=w_{i_{j}+1)}'+w_{i_{j}+1)}+1  \ \text{for} \ 1\leq j\leq k-1 \ \text{and} \ w_{i_k}=w_{i_1}+1, \ w'_{i_k}=w'_{i_1}+1.$$
Algebraically manupilating as above, one can show that this system has a solution if and only if $k$ is divisible by $4$. In this case the matrix $(w|w')$ must consists of only the blocks of one of the forms $\begin{pmatrix} 1 & \rvline & 0 \\ 0& \rvline & 1\\ 1 & \rvline &  1 \\ 0 & \rvline & 0 \\ \end{pmatrix}, \begin{pmatrix} 1 & \rvline & 1 \\ 0& \rvline & 0\\ 1 & \rvline &  0 \\ 0 & \rvline & 1 \\ \end{pmatrix}, \begin{pmatrix} 0 & \rvline & 0 \\ 1& \rvline & 0\\ 0 & \rvline &  1 \\ 1 & \rvline & 1 \\ \end{pmatrix}$ or $\begin{pmatrix} 0 & \rvline & 1 \\ 1& \rvline & 1\\ 0 & \rvline &  0 \\ 1 & \rvline & 0 \\ \end{pmatrix}$.  Therefore the number of $(u,w,w')$'s  with $u \notin S_{\sigma}$, $w \in S_{\beta}$, and $w' \notin S_{\beta}$ fixed by $(\alpha,\beta)$ is
\begin{eqnarray*}
	\begin{cases}
		2^{m_1-1}4^{m_2-1} \quad& \text{when all the cycles of $\beta$ have lengths divisible by $4$,} \\
		0 \quad& \text{otherwise }.	\end{cases}
\end{eqnarray*}

The last case we need to consider is the one where neither $w$ nor $w'$ are in $S_{\beta}$. To be a fixed point, $w$ and $w'$ must satisfy the equations
$$\overline{\beta}(w)+e_2=w, \ \ \overline{\beta}(w+w')=w',  \ \text{and} \ (w)_{\beta(n_3+1)}=(w)_{\beta(n_3+1)}'=1.$$
If $\beta(n_3+1) \in \{i_1,\cdots,i_k\}$, say $\beta(n_3+1)=i_1$, then $w$ must  satisfy the equations $w_{i_j}=1+w_{i_{j}}$ for $1\leq j\leq k-1$ and $w_{i_k}=w_{i_1}=1.$ This system has a solution only if $k$ is odd, say $k=2k'+1$. Since the solution is $w_{i_j}=1$ if $j$ is odd, and $w_{i_{j+1}}=0$, otherwise, $w'$ must satisfies the equations $w_{i_j}'=w_{i_{j+1}}'+1$,if $j$ is odd, $w'(i_j)=w'_{i_{j+1}}$, otherwise. This system has a solution if and only if $k'$ is odd, i.e, the cycle containing $n_3+1$ is divisible by $4$. Otherwise, $w$ and $w'$ must simultanesously satisfies the equations $w_{i_j}=w_{i_j+1},$ and $w'_{i_j}=w'_{i_{j+1}}+w'_{i_{j+1}}$. As before such a system has a solution if and only if $k$ is divisible by $4$. In this case the matrix $(w|w')$ must consists of only the blocks of one of the forms $\begin{pmatrix} 1 & \rvline & 0 \\ 0& \rvline & 0\\ 1 & \rvline &  1 \\ 0 & \rvline & 1 \\ \end{pmatrix}, \begin{pmatrix} 1 & \rvline & 1 \\ 0& \rvline & 1\\ 1 & \rvline &  0 \\ 0 & \rvline & 0 \\ \end{pmatrix}, \begin{pmatrix} 0 & \rvline & 0 \\ 1& \rvline & 1\\ 0 & \rvline &  1 \\ 1 & \rvline & 0 \\ \end{pmatrix}$ or $\begin{pmatrix} 0 & \rvline & 1 \\ 1& \rvline & 0\\ 0 & \rvline &  0 \\ 1 & \rvline & 1 \\ \end{pmatrix}$.  Therefore the number of $(u,w,w')$'s  with $u \notin S_{\sigma}$, $w \in S_{\beta}$, and $w' \notin S_{\beta}$ fixed by an element of this type is
\begin{eqnarray*}
	\begin{cases}
		2^{m_1-1}4^{m_2-1} \quad& \text{when all the cycles of $\beta$ have lengths divisible by $4$,} \\
		0 \quad& \text{otherwise }.	\end{cases}
\end{eqnarray*}

To sum up, the number of elements fixed by $(\alpha,\beta)$ when $\alpha(n_1+1)\neq n_1+1$ and $\beta(n_3+1)\neq n_3+1$ is equal to
\begin{eqnarray*}
	\begin{cases}
		(2^{m_1-1}-1)(4^{m_2-1}-2^{m_2-1}) \quad& \text{when both $\beta$ and $\alpha$ contains a cycle of odd length,} \\
		(2^{m_1-1}-1)(4^{m_2}-2^{m_2}) \quad& \begin{split} \text{when} \ \alpha \ \text{contains a cycle of odd length and}\\ \text{all cycles of} \ \beta \ \text{have even lengths},\end{split} \\ 
		(2^{m_1-1}-1)(4^{m_2-1}-2^{m_2-1})+2^{m_1-1}2^{m_2-1}(2^{e_2}-1) \quad& \begin{split} \text{when} \ \beta \ \text{contains a cycle of odd length and}\\ \text{all cycles of} \ \alpha \ \text{have even lengths},\end{split} \\
		(2^{m_1-1}-1)(4^{m_2}-2^{m_2})+2^{m_1}(4^{m_2-1}-2^{m_2-1})+2^{m_1}4^{m_2-1} \quad& \begin{split} \text{all cycles of} \ \alpha \ \text{have even lengths and}\\ \text{the lengths of the cycles of} \ \beta \ \text{are divisible by} \ 4 ,\end{split} \\
		(2^{m_1-1}-1)(4^{m_2}-2^{m_2})+2^{m_1}(4^{m_2-1}-2^{m_2-1}) \quad& \text{otherwise }.	\end{cases}
\end{eqnarray*}
Therefore the number of elements of $X$ fixed by such $(\alpha,\beta)$'s is equal to $I=I_1+I_2+I_3+I_4+I_5+I_6$ where
\begin{small}
\begin{eqnarray*}I_1&=&\sum_{m_1=1}^{n_1+1}\sum_{m_2=1}^{n_3+1}(2^{m_1-1}-1)(4^{m_2-1}-2^{m_2-1})(n_1c(n_1,m_1)-c_2(n_1+1,m_1))(n_3c(n_3,m_2)-c_2(n_3+1,m_2))\\
	I_2&=&\sum_{m_1=1}^{n_1+1}\sum_{m_2=1}^{n_3+1}(2^{m_1-1}-1)(4^{m_2}-2^{m_2})(n_1c(n_1,m_1)-c_2(n_1+1,m_1))c_2(n_3+1,m_2)\\
	I_3&=&\sum_{m_1=1}^{n_1+1}\sum_{m_2=1}^{n_3+1}(2^{m_1-1}-1)(4^{m_2-1}-2^{m_2-1})c_2(n_1+1,m_1)(n_3c(n_3,m_2)-c_2(n_3+1,m_2))\\
	I_4&=&\sum_{m_1=1}^{n_1+1}\sum_{m_2=1}^{n_3+1}2^{m_1-1}2^{m_2-1}c_2(n_1+1,m_1)\Big(\Big(\sum_{e_2=1}^{m_2}(2^{e_2}-1)(c(n_3+1,m_2,e_2)-c(n_3,m_2-1,e_2)\Big)-(2^{m_2}-1)c_2(n_3+1,m_2)\Big)\\
	I_5&=&\sum_{m_1=1}^{n_1+1}\sum_{m_2=1}^{n_3+1}\Big((2^{m_1-1}-1)(4^{m_2}-2^{m_2})+2^{m_1}(4^{m_2-1}-2^{m_2-1})\Big)c_2(n_1+1,m_1)c_2(n_3+1,m_2)\\
	\text{and}\\
	I_6&=&\sum_{m_1=1}^{n_1+1}\sum_{m_2=1}^{n_3+1}2^{m_1}4^{m_2-1}c_2(n_1+1,m_1)c_4(n_3+1,m_2).\\
	\end{eqnarray*}
\end{small}
Let $f:\mathbb{Z}_+^3\rightarrow \mathbb{R}$ be the function defined by $\displaystyle f(x,n,d)=\frac{(\frac{1}{x})^{\overline{\frac{n+1}{d}}}}{(\frac{n+1}{d})!}.$ Then the above sums are given by the following formulas.

\begin{eqnarray*}
	I_1&=&\begin{cases}
\displaystyle		 \frac{n_1!(n_3+1)!(n_1^2-n_1)(n_3^3+5n_3^2-6n_3)}{48}  & \text{ if $n_1$ and $n_3$ are even,} \\
\displaystyle		 \frac{n_1!(n_3+1)!(n_1^2-n_1)(n_3^3+5n_3^2-9n_3+3)}{48} & \text{ if $n_1,n_3$ are even and odd, respectively,} \\
\displaystyle		 \frac{n_1!(n_3+1)! \big(n_1^2-2n_1-1+2(n_1+1) f(2,n_1,2)\big)(n_3^3+5n_3^2-6n_3)}{48} & \text{ if $n_1,n_3$ are odd and even, respectively,}\\
\displaystyle		 \frac{n_1!(n_3+1)! \big(n_1^2-2n_1-1+2(n_1+1) f(2,n_1,2)\big)(n_3^3+5n_3^2-9n_3+3)}{48} & \text{ if $n_1,n_3$ are odd.}
	\end{cases}\\
		I_2 &=& \begin{cases}
		0 \quad & \text{ if $n_3$ is even.} \\
\displaystyle	 \frac{n_1!(n_3+1)!(n_1^2-n_1)(n_3+1)}{4} \quad & \text{ if $n_1,n_3$ are even and odd, respectively,} \\
\displaystyle	 \frac{n_1!(n_3+1)!\big(n_1^2-2n_1-1+2(n_1+1) f(2,n_1,2)\big)(n_3+1)}{4} \quad & \text{ if $n_1,n_3$ are odd.}
	\end{cases}\\
	I_3&=&\begin{cases}
		0 \quad & \text{ if $n_1$ is even.} \\
\displaystyle		\frac{(n_1+1)!(n_3+1)!(1-2f(2,n_1,2))(n_3^3+5n_3^2-6n_3)}{48} \quad & \text{ if $n_1,n_3$ are odd and even respectively,} \\
\displaystyle		\frac{(n_1+1)!(n_3+1)!(1-2f(2,n_1,2))(n_3^3+5n_3^2-9n_3+3)}{48} \quad & \text{ if $n_1,n_3$ are odd.}
	\end{cases}\\
	I_4&=&\begin{cases}
		0\quad & \text{ if $n_1$ is even.} \\
\displaystyle		\frac{(n_1+1)!n_3!(n_3^3+n_3^2+2n_3)}{16} \quad & \text{ if $n_1,n_3$ are odd and even, respectively,} \\
\displaystyle		\frac{(n_1+1)!(n_3+1)!(n_3^2-2n_3+1)}{16} \quad & \text{ if $n_1,n_3$ are odd.}
	\end{cases}\\
	I_5&=&\begin{cases}
\displaystyle	(n_1+1)!(n_3+1)!\Bigg(\frac{3n_3+1-4(n_3+1)f(2,n_1,2)}{8} \Bigg) \quad & \text{ if $n_1,n_3$ are odd, } \\
	0 \quad & \text{ otherwise.}
	\end{cases}\\
	I_6&=&\begin{cases}
\displaystyle	\frac{(n_1+1)!(n_3+1)!}{4} \quad & \text{ if $n_1$ is odd and $n_3=4k+3$ for $k\in \mathbb{Z}_{\geq 0}.$}	\\
	0 \quad & \text{ otherwise.}
	\end{cases}
\end{eqnarray*}
Therefore their sum is equal to
\begin{eqnarray*}
	\sum_{i=1}^6I_i=\begin{cases}
		\displaystyle \frac{n_1!(n_3+1)!(n_1^2-n_1)(n_3^3+5n_3^2-6n_3)}{48} &\text{ if $n_1,n_3$ are even,} \\
	\displaystyle 	\frac{n_1!(n_3+1)!(n_1^2-n_1)(n_3^3+5n_3^2+3n_3+15)}{48} &\text{ if $n_1,n_3$ are even and odd, respectively,} \\
	\frac{(n_1+1)!n_3!(n_3^3+n_3^2+2n_3)}{16}+\frac{n_1!(n_3+1)!(n_1^2-n_1) (n_3^3+5n_3^2-6n_3)}{48} & \text{ if $n_1,n_3$ are odd and even, respectively,} \\
	\frac{(n_1+1)!(n_3+1)!(n_3^3+8n_3^2+3n_3+12)}{48}+\frac{n_1!(n_3+1)!(n_1^2-2n_1-1) (n_3^3+5n_3^2+3n_3+15)}{48} & \text{ if $n_1$ is odd and $n_3=4k+1$ for $k\in \mathbb{Z}_{\geq 0},$} \\
	\frac{(n_1+1)!(n_3+1)!(n_3^3+8n_3^2+3n_3+24)}{48}+\frac{n_1!(n_3+1)!(n_1^2-2n_1-1) (n_3^3+5n_3^2+3n_3+15)}{48} & \text{ if $n_1$ is odd and $n_3=4k+3$ for $k\in \mathbb{Z}_{\geq 0}.$}
	\end{cases}
\end{eqnarray*}

Hence the number of orbits of the action is given by the formula
\begin{eqnarray*}
	\begin{cases}
		\displaystyle \frac{n_1n_3(n_3^2+9n_3+14)}{48} \quad & \text{ if $n_1$ and $n_3$ are even,} \\
		\displaystyle \frac{n_1(n_3^3+9n_3^2+23n_3+15)}{48} \quad & \text{ if $n_1,n_3$ are even and odd, respectively,} \\
		\displaystyle \frac{n_1n_3(n_3^2+9n_3+14)+3n_3(n_3+2)}{48} \quad & \text{ if $n_1,n_3$ are odd and even, respectively,} \\
		\displaystyle \frac{n_1(n_3^3+9n_3^2+23n_3+15)+3(n_3^2+2n_3-3)}{48} \quad & \text{ if $n_1$ is odd and $n_3=4k+1$ for $k\in \mathbb{Z}_{\geq 0},$} \\
		\displaystyle \frac{n_1(n_3^3+9n_3^2+23n_3+15)+3(n_3^2+2n_3+1)}{48} \quad & \text{ if $n_1$ is odd and $n_3=4k+3$ for $k\in \mathbb{Z}_{\geq 0},$}
	\end{cases}
\end{eqnarray*}
by Burnside's Lemma.
\end{proof}

As an immediate result of the above calculations, we have the following theorem.

\begin{theorem}
	Let $P=\Delta^{n_1}\times\Delta^{n_2}\times \Delta^{n_3}$ with $n_1\leq n_2 \leq n_3.$ The number of weakly $\Z$-equivariant homeomorphism classes of small covers over $P$ is given by the following formulas
	\begin{eqnarray*}
		\begin{cases}
			1+\underset{i=1}{\overset{3}{\sum}}(2\lfloor \frac{n_i+1}{2}\rfloor+f(n_i))+\underset{1\leq i<j\leq 3}{\sum}\lfloor \frac{n_i+1}{2} \rfloor \lfloor \frac{n_j+1}{2}+\underset{1\leq i\neq j\leq 3}{\sum}h(n_i,n_j) \quad& \text{ if } n_1<n_2<n_3, \\
			1+\lfloor \frac{n+1}{2}\rfloor +\lfloor \frac{n_3+1}{2}\rfloor+f(n)+f(n_3)+\lfloor \frac{n+1}{2}\rfloor\lfloor \frac{n_3+1}{2}\rfloor+\lfloor \frac{n+1}{2}\rfloor^2+h(n,n)+h(n,n_3)+h(n_3,n)\quad& \text{ if }  n=n_1=n_2<n_3\\
			1+\lfloor \frac{n+1}{2}\rfloor+f(n)+\lfloor \frac{n+1}{2}\rfloor^2+h(n,n)\quad& \text{ if } n=n_1=n_2n_3\\
		\end{cases}
	\end{eqnarray*}
	where 
	\begin{eqnarray*}
		f(n)=\begin{cases}
			\dfrac{n^3+9n^2+2n}{24}\quad& \text{ if \ $n$ \ is even}, \\
			\dfrac{(n+1)(n^2+8n+3)}{24}\quad& \text{otherwise.} \\
		\end{cases}
	\end{eqnarray*}
	and $h(n,m)$ is as defined in the above lemma.
\end{theorem}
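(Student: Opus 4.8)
The plan is to invoke Theorem~\ref{thm:main} to replace the topological count by the purely combinatorial count of $\omega$-equivalence classes of acyclic $\omega$-vector weighted digraphs on the three labelled vertices $\{v_1,v_2,v_3\}$ with $\omega(i)=n_i$, and then to assemble this count from the twenty-five underlying acyclic digraphs of Figure~\ref{figure:table3}. The key structural observation is that the three $\omega$-equivalence operations act on these types in a controlled way: the weight permutation (operation 2) and the $(\sigma,k)$-local complementation (operation 3) performed at a source or at a sink leave the underlying digraph unchanged, while local complementation at a genuine intermediate vertex only toggles the ``shortcut'' edge. Hence operation 3 links each directed path $a\to b\to c$ with the transitive triangle on the same linear order $a>b>c$ and fixes the isomorphism type of every other digraph, whereas the vertex reordering (operation 1) merely permutes the types whose source/target labels carry equal dimension. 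Consequently, when $n_1<n_2<n_3$ no reordering is available, and each type (or each path--triangle pair) may be counted in isolation.

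For the generic case $n_1<n_2<n_3$ I would then read off the per-type counts already established. The empty digraph contributes $1$; each single-edge digraph out of a vertex of dimension $n_i$ contributes $\lfloor\tfrac{n_i+1}{2}\rfloor$ by the analysis in Example~\ref{ex}, and since each vertex is the source of exactly two of the six single-edge types this yields $2\sum_{i}\lfloor\tfrac{n_i+1}{2}\rfloor$; each of the three out-stars with source $v_i$ contributes $f(n_i)$, this being precisely the value produced by the lemma computing Type~$8$ after the substitution $n_i=2k$ or $n_i=2k+1$; each of the three in-stars into $v_i$ has two independently weighted source edges and so contributes the product $\lfloor\tfrac{n_j+1}{2}\rfloor\lfloor\tfrac{n_l+1}{2}\rfloor$ over the sources $j,l\neq i$, summing to $\sum_{i<j}\lfloor\tfrac{n_i+1}{2}\rfloor\lfloor\tfrac{n_j+1}{2}\rfloor$; and each of the six path--triangle families, indexed by a linear order $a>b>c$, contributes $h(n_b,n_a)$ by the lemma on Types~$11$ and~$23$ (where Type~$11$ is the order $3>1>2$ giving $h(n_1,n_3)$), so that summing over all six ordered pairs $(b,a)$ of distinct middle/source indices gives $\sum_{i\neq j}h(n_i,n_j)$. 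Adding these five contributions reproduces the first formula.

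For the two degenerate cases I would rerun the count with the enlarged reordering group, namely the transposition $(1\,2)$ when $n_1=n_2=n<n_3$ and the full symmetric group $S_3$ when $n_1=n_2=n_3=n$, applying Burnside's lemma to this group acting on the set of classes found above. Types lying in a common reordering orbit (for instance the two out-stars with sources $v_1,v_2$, or the two in-stars into $v_1,v_2$, or the path--triangle families whose orders are interchanged) are identified and contribute a single copy, whereas a type fixed by a reordering---such as the out-star from $v_3$ or the in-star into $v_3$---must be quotiented by the residual symmetry permuting its equal-dimension edges; this residual involution exchanges the two weight vectors, so the surviving contribution is the number of orbits of that involution on the relevant class set. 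Collecting the surviving terms and simplifying them with the explicit closed forms for $f$ and $h$ should yield the second and third formulas, the full $S_3$-action being free on the six linear orders so that the path--triangle families again contribute a single $h(n,n)$.

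I expect the main obstacle to be exactly this degenerate bookkeeping: for each type fixed by a nontrivial reordering one must pin down the precise residual action on its weight data and count the classes fixed by that involution, and then verify that the resulting Burnside half-sums recombine into the stated closed forms. Tracking the interaction between local complementation and reordering on the path--triangle families requires particular care, and one should confirm that no unexpected identifications arise among out-stars, in-stars, and the empty graph. Everything beyond this reduces to the orbit counts recorded in the preceding lemmas together with the recurrences for the Stirling-type numbers developed in Section~\ref{sect:permutations}.
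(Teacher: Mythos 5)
Your proposal follows the paper's own route exactly: it reduces to counting $\omega$-equivalence classes via Theorem \ref{thm:main}, decomposes over the $25$ underlying digraphs of Figure \ref{figure:table3}, notes that only the path and transitive-triangle types can mix under $(\sigma,k)$-local complementation, and assembles the formula from the per-type counts $1$, $\lfloor\frac{n_i+1}{2}\rfloor$, $f(n_i)$, $\lfloor\frac{n_i+1}{2}\rfloor\lfloor\frac{n_j+1}{2}\rfloor$ and $h(n_i,n_j)$ established in Example \ref{ex} and the two lemmas of the final section. The paper dismisses the degenerate cases $n_1=n_2\leq n_3$ as immediate, and your plan of quotienting by the extra vertex reorderings is precisely the intended completion of that step.
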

\section*{Acknowledgment}
This work is supported by The Scientific and Technological Research Council of Turkey (Grant No: TBAG/118F310).

\end{document}